\newtheorem{theorem}{Theorem}[section]
\newtheorem{lemma}[theorem]{Lemma}
\newtheorem{proposition}[theorem]{Proposition}
\theoremstyle{remark}
\newtheorem*{remark}{Remark}
\newcommand{\grad}{\operatorname{grad}}
\newcommand\Alt{\operatorname{Alt}}
\renewcommand\L{\mathcal L}
\renewcommand\P{\mathcal P}
\newcommand\R{\mathbb{R}}
\newcommand\Th{\mathcal T_h}
\newcommand{\curl}{\operatorname{curl}}
\renewcommand{\div}{\operatorname{div}}
\newcommand{\tr}{\operatorname{tr}}
\newcommand{\rot}{\operatorname{rot}}
\newcommand{\dd}{\delta}
\def\<{\langle}
\def\>{\rangle}
\def\x{\times}
\def\0{\mathring}
\def\Bfrak{\mathfrak B}
\def\Hfrak{\mathfrak H}
\def\sigerr{\Sigma_h}
\def\uerr{U_h}
\def\uerrt{U_{h,t}}
\begin{document}
\title{Finite element exterior calculus for parabolic problems}
\author{Douglas N. Arnold}
\address{School of Mathematics, University of Minnesota, Minneapolis, MN 55455}
\email{arnold@umn.edu}
\author{Hongtao Chen}
\address{School of Mathematical Sciences, Xiamen University, Xiamen, 361005}
\email{chenht@xmu.edu.cn}
\thanks{The work of the first author was supported in part by NSF grant
DMS-1115291}
\begin{abstract}
In this paper, we consider the extension of the finite element exterior calculus from
elliptic problems, in which the Hodge Laplacian is an appropriate model problem,
to parabolic problems, for which we take the Hodge heat equation as our model problem.
The numerical method we study is a Galerkin method based on a mixed variational formulation
and using as subspaces the same spaces of finite element differential
forms which are used for elliptic problems.  We analyze both the semidiscrete
and a fully-discrete numerical scheme.
\end{abstract}
\subjclass[2000]{Primary: 65N30}
\keywords{finite element exterior calculus, mixed finite element method, parabolic equation, Hodge heat equation}
\maketitle

\section{Introduction}\label{sec:intro}
In this paper we consider the numerical solution of the \emph{Hodge heat equation}, the
parabolic equation associated to the Hodge Laplacian.  The initial-boundary value problem we study is
\begin{gather}
 u_t+(d\dd +\dd d)u=f \quad\text{in $\Omega\times (0,T]$},\label{hhe}\\
\tr(\star u)=0,\ \tr(\star du)=0 \quad\text{on $\partial\Omega\times(0,T]$},\label{bc}\\
u(\,\cdot\,,0)=u_0 \quad\text{in $\Omega$}.\label{ic}
\end{gather}
(We could consider other boundary conditions as well.)
Here the domain $\Omega\subset\mathbb{R}^n$ has a piecewise smooth, Lipschitz boundary,
the unknown $u$ is a time dependent differential $k$-form on $\Omega$, $u_t$ denotes its
partial derivative with respect to time, and $d$, $\delta$, $\star$, and $\tr$
denote the exterior derivative, coderivative, Hodge star, and trace
operators, respectively.

The numerical methods we consider are mixed finite element methods.  These
are based on the mixed weak formulation:
find $(\sigma,u):[0,T]\rightarrow
H\Lambda^{k-1} \times H\Lambda^k$, such that $u(0)=u_0$ and
\begin{gather}\label{mwf1}
 \<\sigma,\tau\>-\< d\tau,u\>=0, \quad \tau\in H\Lambda^{k-1},\ t\in(0,T],
\\\label{mwf2}
\< u_t,v\>+\< d\sigma,v\>+\< du, dv\>=\< f,v\>,  \quad v\in H\Lambda^k,\ t\in(0,T],
\end{gather}
(The notations are explained in the following section.)  Notice that, unlike in the
elliptic case, the harmonic forms need not be explicitly accounted for
in the weak formulation. The
well-posedness of the mixed formulation \eqref{mwf1},
\eqref{mwf2} is established in a precise sense in Theorem~\ref{wp}.

In the simplest case of $0$-forms ($k=0$), the differential equation
\eqref{hhe} is simply the heat equation,
$u_t-\Delta u = f$, and the boundary condition \eqref{bc}
is the Neumann boundary condition, $\partial u/\partial n=0$.
Moreover, in this case the space $H\Lambda^{k-1}$ vanishes,
and the weak formulation \eqref{mwf1}--\eqref{mwf2} is the usual (unmixed) one:
$u:[0,T]\to H^1(\Omega)$ satisfies
$$
\< u_t,v\> + \< \grad u,\grad v\> = \< f,v\>, \quad
v\in H^1(\Omega),\ t\in(0,T].
$$
In this case, the numerical methods and convergence results
obtained in this paper reduce to ones long known \cite{MR0277126,MR0351124}.
In the case of $n$-forms,
the differential equation is again the heat equation, although the natural boundary condition
is now the Dirichlet condition $u=0$.  In the case of $n$-forms,
the weak formulation seeks $\sigma\in H(\div)$, $u\in L^2$ such that
$$
\<\sigma,\tau\> - \<\div\tau, u\> = 0, \ \tau\in H(\div),
\quad
\<u_t,v\> - \<\div\sigma,v\> = \< f,v\>, \ v\in L^2, \quad t\in (0,T].
$$
This mixed method for the heat equation
was studied by Johnson and Thom\'ee in \cite{MR610597} in two dimensions.  Recently, Holst and Gillette
\cite{holst-gillette} have studied this mixed method in $n$-dimensions using a finite element exterior calculus
framework (in their work they consider hyperbolic problems as well).

For $k=1$ or $2$ in $n=3$ dimensions, the differential equation \eqref{hhe}
is the vectorial heat equation,
$$
u_t+\curl\curl u-\grad\div u=f.
$$
The weak formulations \eqref{mwf1}--\eqref{mwf2} for $k=1$ and $2$ correspond to two different
mixed formulations of this equation, the former using the scalar field $\sigma=\div u$ as
the second unknown, the latter using the vector field $\sigma=\curl u$.
For $k=1$, the boundary conditions \eqref{bc}, which are natural in the mixed
formulation, become
$u\cdot n=0$, $\curl u\x n=0$, while for $k=2$ these natural boundary conditions are
$u\x n=0$, $\div u=0$.  This vectorial heat equation arises, for example, in the
linearization of the Ginzburg--Landau equations for superconductivity \cite{GL},
and is related to the dynamical equations of Stokes and Navier--Stokes flow
(see, e.g., \cite{MR2784829}).

To discretize \eqref{mwf1}, \eqref{mwf2}, we utilize the two main families of finite element differential forms,
  the $\mathcal{P}_r\Lambda^k$ and
$\mathcal{P}_r^-\Lambda^k$ spaces.   Between them they include lots
of the best known families of finite elements on simplicial meshes
\cite[Section 5]{MR2594630}. We give both semidiscrete and fully
discrete schemes, and the corresponding convergence analysis.
Convergence rates under different norms  are shown in our final
results (see Theorem~\ref{semiconver} and Theorem~\ref{fullyconver} below). These achieve the
optimal rates allowed by the finite element spaces provided some
regularity assumptions are satisfied. These results also reveal the
relation between convergence rates under different norms and the
regularity of the exact solution.

The outline of the remainder of the paper is as follows. In Section~\ref{sec:prelim}, we review of basic notations
from finite element exterior calculus, including the two main families of finite element differential
forms, the $\P_r^-\Lambda^k$ and $\P_r\Lambda^k$ families, and some of their properties.
In Section~\ref{sec:ep}, we apply the elliptic theory to define an elliptic projection which will
be crucial to the error analysis of the time-dependent problem, and to obtain error estimates for it.
In Section~\ref{sec:wp}, we turn to the Hodge heat equation at the continuous level
and establish well-posedness of the mixed formulation.  We then give
a convergence analysis for the semidiscrete and fully discrete schemes in Sections~\ref{sec:sd} and \ref{sec:fd}, respectively.
Finally, we present some numerical examples confirming the results.

\section{Preliminaries}\label{sec:prelim}
We briefly review here some basic notions of
finite element exterior calculus for the Hodge Laplacian. Details can be found in
\cite[\S~2]{MR2269741} and
\cite[\S\S~3--4]{MR2594630} and in numerous references given there.

For $\Omega$ a domain in $\R^n$ and $k$ an integer,
let $L^2\Lambda^k=L^2\Lambda^k(\Omega)$ denote the Hilbert
space of differential $k$-forms on $\Omega$ with coefficients in $L^2$.  This is the space
of $L^2$ functions on $\Omega$ with values in $\Alt^k\R^n$, a finite dimensional Hilbert space
of dimension $\binom{n}{k}$
(understood to be $0$ if $k<0$ or $k>n$).
The \emph{Hodge star} operator $\star$
is an isometry of $\Alt^k\R^n$ and $\Alt^{n-k}\R^n$, and so induces an isometry
of $L^2\Lambda^k$ onto $L^2\Lambda^{n-k}$.  The inner product in $L^2\Lambda^k$
may be written $\< u,v\> = \int_\Omega u\wedge \star v$, with the corresponding
norm denoted $\|u\|$.
We view the exterior
derivative $d=d^k$ as a unbounded operator from $L^2\Lambda^k$ to $L^2\Lambda^{k+1}$.
Its domain, which we denote $H\Lambda^k(\Omega)$,
consists of forms $u\in L^2\Lambda^k$ for which the distributional
exterior derivative $du$ belongs to $L^2\Lambda^{k+1}$.
Assuming, as we shall, that $\Omega$
has Lipschitz boundary, the \emph{trace operator} $\tr=\tr_{\partial\Omega}$
maps $H\Lambda^k(\Omega)$ boundedly into an appropriate Sobolev space
on $\partial\Omega$ (namely $H^{-1/2}\Lambda^k(\partial\Omega)$).
The \emph{coderivative} $\delta$ is defined as $(-1)^{n(k+1)+1}\star d \star:H^*\Lambda^k
\to H^*\Lambda^{k-1}$, where $H^*\Lambda^k:= \star H\Lambda^{n-k}$.
The adjoint
$d^*=d^*_k$ of $d^{k-1}$ is the unbounded operator $L^2\Lambda^k\to L^2\Lambda^{k-1}$
given by restricting $\delta$ the domain of $d^*$,
$$
D(d^*) = \0H^*\Lambda^k:=\{\, u\in H^*\Lambda^k\,|\,\tr\star u =0\,\}.
$$

We denote by $\mathfrak{Z}^k$ and $\mathfrak{Z}^*_k$ the null spaces
of $d^k$ and $d^*_k$, respectively.  Their orthogonal complements
in $L^2\Lambda^k$ are $\mathfrak{B}^*_k$ and $\mathfrak{B}^k$,
the ranges of $d^*_{k+1}$ and $d^{k-1}$, respectively.  The orthogonal
complement of $\mathfrak{B}^k$ inside $\mathfrak{Z}^k$ is the
space of \emph{harmonic forms}
$$
\mathfrak{H}^k=\mathfrak{Z}^k\cap \mathfrak{Z}_k^{*}
= \{\, \omega\in H\Lambda^k(\Omega)\cap
\0H^*\Lambda^k(\Omega) \, |\, d\omega= 0, d^* \omega= 0\,\}.
$$
The dimension of $\mathfrak H^k$ is equal to the $k$th Betti number of $\Omega$,
so $\mathfrak H^k=0$ for $k\ne 0$ if $\Omega$ is contractible.
The \emph{Hodge
decomposition} of $L^2\Lambda^k$ and of $H\Lambda^k$ follow immediately:
\begin{gather}
L^2\Lambda^k=\mathfrak{B}^k\oplus\mathfrak{H}^k\oplus \mathfrak{B}_k^{*},\label{sec2:eq5}\\
H\Lambda^k=\mathfrak{B}^k\oplus \mathfrak{H}^k\oplus \mathfrak{Z}^{k\perp},\label{sec2:eq4}
\end{gather}
where $\mathfrak{Z}^{k\perp}=H\Lambda^k\cap\mathfrak{B}_k^{*}$ denotes the orthogonal complement of
$\mathfrak{Z}^k$ in $H\Lambda^k$.

The \emph{Hodge Laplacian} is the unbounded operator $L=dd^*+d^* d:
D(L)\subset L^2\Lambda^k\to L^2\Lambda^k$ with
\begin{equation*}
D(L)=\{\,v\in H\Lambda^k\cap \0H^*\Lambda^k\ \,|\,\ d^* v\in H\Lambda^{k-1}, dv\in
\0H^*\Lambda^{k+1}\,\}.
\end{equation*}
The null space of $L$ consists precisely of the harmonic forms $\mathfrak{H}^k$.

For any $f\in L^2\Lambda^k$, there exists a unique solution $u=Kf\in D(L)$
satisfying
$$
Lu=f \  (\operatorname{mod}\Hfrak), \quad u\perp \mathfrak{H}^k,
$$
(see \cite[Theorem 3.1]{MR2594630}). The solution $u$
satisfies the Hodge Laplacian boundary value problem
$$
(d\delta + \delta d ) u = f- P_{\mathfrak H}f \text{ in $\Omega$},\quad
\tr\star u =0, \ \tr \star du =0 \text{ on $\partial\Omega$},
$$
together with side condition $u\perp \mathfrak{H}^k$ required
for uniqueness.  The solution
operator $K$ is a compact
operator $L^2\Lambda^k\rightarrow H\Lambda^k\cap
\0H^*\Lambda^k$ and \emph{a fortiori}, is compact as an operator from
$L^2\Lambda^k$ to itself.

Now we consider the mixed finite element
discretization of the Hodge Laplacian boundary value problem,
following \cite{MR2594630}.
This is based on the mixed weak formulation, which
seeks $\sigma\in H\Lambda^{k-1}$, $u\in H\Lambda^k$, and
$p\in \mathfrak H^k$ such that
\begin{gather*}
\< \sigma,\tau\>-\< d\tau,u\>=0,\quad \tau\in  H\Lambda^{k-1},\\
\< d \sigma,v\>+\< du,dv\>+\< p,v\>=\< f,v\>,\quad  v\in H\Lambda^{k},\\
\< u,q\>=0,\quad q\in \mathfrak{H}^k.
\end{gather*}
It admits a unique solution given by $u=Kf$, $\sigma = d^*u$, $p=P_{\mathfrak H}f$.
We discretize the mixed formulation using Galerkin's
method.  For this, let $\Lambda^{k-1}_h$ and $\Lambda^k_h$
be finite dimension subspaces of $H\Lambda^{k-1}$ and $H\Lambda^k$, respectively,
satisfying $d \Lambda_h^{k-1}\subset \Lambda_h^k$.  We define the space of
\emph{discrete harmonic forms} $\mathfrak H^k_h$ as the orthogonal complement
of $\mathfrak B^k_h := d \Lambda_h^{k-1}$ inside
$\mathfrak Z^k_h:=\mathfrak Z\cap \Lambda_h^k$.  This immediately
gives the \emph{discrete Hodge decomposition}
\begin{equation*}
\Lambda_h^k=\mathfrak{B}_h^k\oplus \mathfrak{H}_h^k\oplus \mathfrak{Z}_h^{k\perp},
\end{equation*}
where $\mathfrak Z_h^{k\perp}$ is the orthogonal complement of $\mathfrak Z_h^k$
inside $\Lambda_h^k$.

The Galerkin method seeks
$\sigma_h\in \Lambda_h^{k-1}$, $u_h\in \Lambda_h^k$,
$p_h\in \mathfrak H_h^k$ such that
\begin{equation}\label{gal}
\begin{gathered}
\< \sigma_h,\tau\>-\< d\tau,u_h\>=0,\quad \tau\in  \Lambda^{k-1}_h,\\
\< d \sigma_h,v\>+\< du_h,dv\>+\< p_h,v\>=\< f,v\>,\quad  v\in \Lambda^{k}_h,\\
\< u_h,q\>=0,\quad q\in \mathfrak{H}^k_h.
\end{gathered}
\end{equation}

For the analysis of this discretization, we require the existence of a third space $\Lambda^{k+1}_h\subset H\Lambda^{k+1}$
which contains $d\Lambda^k_h$, so that $\Lambda^{k-1}_h\xrightarrow{d} \Lambda^k_h \xrightarrow{d}\Lambda^{k+1}_h$
is a subcomplex of the segment $H\Lambda^{k-1}\xrightarrow{d} H\Lambda^k \xrightarrow{d}H\Lambda^{k+1}$ of the de~Rham
complex.  Further we require that there exists a \emph{bounded cochain projection}, i.e., bounded linear
projection maps $\pi^j_h:H\Lambda^j\to \Lambda^j_h$, $j=k-1,k,k+1$, such that the diagram
\begin{equation}\label{bcp}
\begin{CD}
 H\Lambda^{k-1} @>d>> H\Lambda^k @>d>> H\Lambda^{k+1}
\\
@V\pi^{k-1}_hVV  @V\pi^{k}_hVV  @V\pi^{k+1}_hVV
\\
\Lambda^{k-1}_h @>d>> \Lambda^k_h @>d>>\Lambda^{k+1}_h
\end{CD}
\end{equation}
commutes.  A key result of the finite element exterior calculus is that,
under these assumptions, the Galerkin equations \eqref{gal} admit a unique solution
and provide a stable discretization.

Another important aspect of the finite element exterior calculus is the construction
of finite element spaces $\Lambda_h^k$ which satisfy these hypothesis, i.e.,
which combine to form de~Rham subcomplexes
with bounded cochain projections.  Let there be given a shape regular family of
meshes $\Th$ with mesh size $h$ tending to $0$.
For each $r\ge 1$, we define two finite element subspaces of
$H\Lambda^k$, denoted $\P_r\Lambda^k(\Th)$ and $\P_r^-\Lambda^k(\Th)$.
For $k=0$, these two spaces coincides and equal the degree $r$ Lagrange
finite element subspace of $H^1(\Omega)$.  For $k=n$, $\P_r^-\Lambda^n(\Th)$
coincides with $\P_{r-1}\Lambda^n(\Th)$, which may be viewed as the space
of all piecewise polynomials of degree at most $r-1$, without inter-element
continuity constraints.  However, for $0<k<n$,
$$
\P_{r-1}\Lambda^k(\Th) \subsetneq \P_r^-\Lambda^k(\Th) \subsetneq \P_r\Lambda^k(\Th).
$$
For stable mixed finite elements for the Hodge Laplacian, we have four
possibilities (which reduce to just one for $k=0$ and to two for $k=1$ or $n$):
\begin{equation}\label{spaces}
 \Lambda^{k-1}_h= \left\{
\begin{matrix}
 \P_{r}\Lambda^{k-1}(\Th)\\[1ex] \text{or} \\[1ex] \P_{r}^-\Lambda^{k-1}(\Th)
\end{matrix}
\right\},\quad
\Lambda^k_h=
 \left\{
\begin{matrix}
 \P_{r}^-\Lambda^k(\Th)\\[1ex] \text{or} \\[1ex] \P_{r-1}\Lambda^k(\Th) \text{ (if $r>1$)}
\end{matrix}
\right\}.
\end{equation}
As the auxiliary space, if $\Lambda^k_h = \P_r^-\Lambda^k(\Th)$, we take $\Lambda^{k+1}_h=\P_r^-\Lambda^{k+1}(\Th)$,
while if $\Lambda^k_h = \P_{r-1}\Lambda^k(\Th)$, we take $\Lambda^{k+1}_h=\P_{r-1}^-\Lambda^{k+1}(\Th)$.

For this choice of spaces, it is known
(\cite[\S~5.4]{MR2269741}, \cite[\S~5.5]{MR2594630}, \cite{MR2373181}) that there
exist cochain projections as in \eqref{bcp} for which $\pi^j_h:L^2\Lambda^j\to\Lambda_h^j$
is bounded in $L^2\Lambda^j$ uniformly with respect to $h$.  In particular, this implies
that there is a constant $C$ independent of $h$ such that
\begin{equation}\label{bestapprox}
  \|u-\pi^j_h u\|\le C\inf_{v\in\Lambda^j_h}\|u-v\|,\quad u\in L^2\Lambda^j.
\end{equation}
Moreover, we have the approximation estimates
\begin{equation}\label{l2approx}
\|u-\pi^j_hu\| \le C h^s \|u\|_s, \quad 0\le s \le
\begin{cases}
r, & \Lambda^j_h=\P_r^-\Lambda^j(\Th),\\
r+1, & \Lambda^j_h=\P_r\Lambda^j(\Th).
\end{cases}
\end{equation}
Note that we use $\|u\|_s$ as a notation for the Sobolev norm $\|u\|_{H^s\Lambda^j}$.

\section{Elliptic projection of the exact solution}\label{sec:ep}
As usual, we shall obtain error estimates for the finite element approximation to the
evolution equation by comparing it to an appropriate elliptic projection of the exact
solution into the finite element space.  In this section we define the elliptic projection
and establish error estimates for it.

Given any $u\in D(L)$, the elliptic
projection of $u$ is defined as
$(\hat{\sigma}_h,\hat{u}_h,\hat{p}_h)\in \Lambda_h^{k-1}\times
\Lambda_h^k\times \mathfrak{H}_h^k$, such that
\begin{gather}
\< \hat{\sigma}_h,\tau\>-\< d\tau,\hat{u}_h\>=0,\quad \tau\in  \Lambda_h^{k-1},\label{ep1}\\
\< d \hat{\sigma}_h,v\>+\< d\hat{u}_h,dv\>+\< \hat{p}_h,v\>=\< Lu,v\>,\quad  v\in \Lambda_h^{k},\label{ep2}\\
\< \hat{u}_h,q\>=\< u,q\>,\quad q\in \mathfrak{H}_h^k.\label{ep3}
\end{gather}
By Theorem 3.8 of \cite{MR2594630} there exists a unique solution to
\eqref{ep1}--\eqref{ep3}.  Now we follow the approach of \cite{MR2594630} to
derive error estimates.  To this end, we introduce some notations.  First, let
$P_{\mathfrak{H}_h}: L^2\Lambda^k\rightarrow \mathfrak{H}_h^k$ denote the $L^2$-projection.
From \eqref{ep3}, $P_{\mathfrak{H}_h}\hat{u}_h=P_{\mathfrak{H}_h}u$.
Moreover, from \cite[Section 3.4]{MR2594630},
$$
\hat{p}_h=P_{\mathfrak{H}_h}(L u)= P_{\mathfrak{H}_h}(d\sigma),
$$
where $\sigma=d^*u$, the last equality holding because $d^* du\in
\mathfrak{B}^*_k\perp \mathfrak{Z}^k$, but $\mathfrak{H}^k_h\subset \mathfrak{Z}_h^k\subset
\mathfrak{Z}^k$.

Next, define $\beta=\beta_h^k$, $\mu=\mu_h^k$, and $\eta=\eta_h^k$ by
\begin{gather*}
\beta=\|(I-\pi_h)K\|_{\L(L^2\Lambda^k,L^2\Lambda^k)},\
\mu=\|(I-\pi_h)P_{\mathfrak{H}^k}\|_{\L(L^2\Lambda^k,L^2\Lambda^k)},\\
 \eta=\max_{j=0,1}\max[\|(I-\pi_h)dK\|_{\L(L^2\Lambda^{k-j},L^2\Lambda^{k-j+1})},\|(I-\pi_h)d^*
 K\|_{\L(L^2\Lambda^{k+j},L^2\Lambda^{k+j-1})}].
\end{gather*}
From \eqref{bestapprox} and the compactness of $K:L^2\Lambda^k\to H\Lambda^k\cap \0H^*\Lambda^k$,
we conclude that $\eta,\beta,\mu\rightarrow 0$ as $h\rightarrow
0$.  Assuming $H^2$ regularity for the Hodge Laplacian (by which we mean
both that $\|Kf\|_2\le C\|f\|_0$ for all $f\in L^2\Lambda^k$ and that
$\mathfrak{H}^k\subset H^2$), then we have
\begin{equation}\label{etabeta}
\eta = O(h), \quad \beta,\mu=O(h^{\min(2,r)})
\end{equation}
for any of the choices of spaces in \eqref{spaces}.  Note that $\beta=O(h^2)$ except
in the case $r=1$ and so $\Lambda^k_h=\P_1^-\Lambda^k$.

Finally, we denote the best approximation error in the $L^2$ norm by
\begin{equation*}
 E(w)=\inf_{v\in \Lambda_h^k}\|w-v\|,\quad w\in L^2\Lambda^k,\ k=0,\dots,n.
\end{equation*}
We are now ready to give the error estimates for the elliptic projection.

\begin{theorem}\label{ellp}
 Let $u\in D(L)$ and let
$(\hat{\sigma}_h,\hat{u}_h)$ be defined by \eqref{ep1}--\eqref{ep3}. Then we have
\begin{gather}
 \|d(\sigma-\hat{\sigma}_h)\|\leq CE(d\sigma),\label{epest1}\\
\|\sigma-\hat{\sigma}_h\|\leq C(E(\sigma)+\eta E(d\sigma)),\label{epest2}\\
\|\hat{p}_h\|\leq C\mu E(d\sigma),\label{epest3}\\
\|d(u-\hat{u}_h)\|\leq C(E(du)+\eta E(d\sigma)),\label{epest4}\\
\|u-\hat{u}_h\|\leq C(E(u)+E(P_{\mathfrak{H}}u)+\eta [E(du)+E(\sigma)]
+(\eta^2+\beta)E(d\sigma)+\mu E(P_{\mathfrak{B}}u)).\label{epest5}
\end{gather}
\end{theorem}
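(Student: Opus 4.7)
My plan is to reduce the theorem to the standard elliptic mixed-method error estimates of \cite[Theorem~3.11]{MR2594630} by a change of unknown that absorbs the non-standard side condition \eqref{ep3}. Define $\hat w_h := \hat u_h - P_{\mathfrak{H}_h}u \in \Lambda_h^k$; by \eqref{ep3} this satisfies $\hat w_h \perp \mathfrak{H}_h^k$, and $d\hat w_h = d\hat u_h$ since $P_{\mathfrak{H}_h}u\in \mathfrak{H}_h^k\subset \mathfrak{Z}_h^k$. Substituting $\hat u_h = \hat w_h + P_{\mathfrak{H}_h}u$ in \eqref{ep1}--\eqref{ep2} and using $d\Lambda_h^{k-1}=\mathfrak B_h^k \perp \mathfrak{H}_h^k$, one verifies that $(\hat\sigma_h, \hat w_h, \hat p_h)$ is exactly the standard Galerkin mixed approximation \eqref{gal} with data $f = Lu$. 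Since $Lu = d\sigma + d^*du \in \mathfrak{B}^k \oplus \mathfrak{B}^*_k \perp \mathfrak{H}^k$, we have $P_{\mathfrak{H}}(Lu)=0$ and $K(Lu)=u-P_{\mathfrak H}u$, so the corresponding exact continuous solution is $(\sigma, u - P_{\mathfrak{H}}u, 0)$.

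The estimates \eqref{epest1}--\eqref{epest4} then follow directly from \cite[Theorem~3.11]{MR2594630} applied to this pair, using the identities $d(u - P_{\mathfrak{H}}u - \hat w_h) = d(u - \hat u_h)$ and $P_{\mathfrak{B}}(u - P_{\mathfrak{H}}u) = P_{\mathfrak{B}}u$. In particular, since the continuous $p$ vanishes, $\|\hat p_h\|$ is bounded via the identity $\hat p_h = P_{\mathfrak{H}_h}(I - \pi_h)d\sigma$ (because $\pi_hd\sigma=d\pi_h\sigma\in\mathfrak B_h^k\perp\mathfrak H_h^k$) together with $(I-\pi_h)d\sigma \in \mathfrak{B}^k \perp \mathfrak{H}^k$ and the AFW harmonic-gap estimate $\|(I - P_{\mathfrak{H}})q\| \le C\mu\|q\|$ for $q \in \mathfrak{H}_h^k$.

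Only \eqref{epest5} requires extra work, since AFW controls $\|u - P_{\mathfrak{H}}u - \hat w_h\|$ rather than $\|u - \hat u_h\|$. Writing
\[
u - \hat u_h = (u - P_{\mathfrak{H}}u - \hat w_h) + (P_{\mathfrak{H}}u - P_{\mathfrak{H}_h}u),
\]
the first term is bounded by AFW after invoking $E(u - P_{\mathfrak{H}}u) \le E(u) + E(P_{\mathfrak{H}}u)$. For the second, decompose $u$ by the continuous Hodge splitting \eqref{sec2:eq5}; since $\mathfrak{H}_h^k \subset \mathfrak{Z}^k \perp \mathfrak{B}^*_k$, the $\mathfrak{B}^*_k$-component of $u$ is killed by $P_{\mathfrak{H}_h}$, leaving
\[
P_{\mathfrak{H}_h}u - P_{\mathfrak{H}}u = P_{\mathfrak{H}_h}P_{\mathfrak{B}}u - (I - P_{\mathfrak{H}_h})P_{\mathfrak{H}}u.
\]
The first piece equals $P_{\mathfrak{H}_h}(I-\pi_h)P_{\mathfrak{B}}u$ (since $\pi_h P_{\mathfrak{B}}u \in \mathfrak{B}_h^k \perp \mathfrak{H}_h^k$) and contributes $O(\mu E(P_{\mathfrak{B}}u))$ by the duality of the previous paragraph; the second is $O(E(P_{\mathfrak{H}}u))$, handled by taking the comparison element $P_{\mathfrak{H}_h}\pi_h P_{\mathfrak{H}}u \in \mathfrak{H}_h^k$ (using that $\pi_h$ preserves $\mathfrak Z$-membership) together with $P_{\mathfrak{B}_h}P_{\mathfrak{H}}u = 0$.

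The principal obstacle is this last bookkeeping step: ensuring that the gap between the continuous and discrete harmonic projections of $u$ contributes precisely $E(P_{\mathfrak{H}}u)$ (unweighted) and $\mu E(P_{\mathfrak{B}}u)$, and that these combine cleanly with the AFW bound on $\hat w_h$ without producing any additional cross-terms.
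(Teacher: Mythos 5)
Your proposal is correct and follows essentially the same route as the paper: subtract the harmonic projections to reduce to the standard AFW mixed-method estimates (Theorem 3.11 of \cite{MR2594630}) with data $f=Lu$ and exact solution $(\sigma,u-P_{\mathfrak H}u,0)$, then control the remaining gap $P_{\mathfrak H}u-P_{\mathfrak H_h}u$ by splitting it into $(I-P_{\mathfrak H_h})P_{\mathfrak H}u$ (giving $E(P_{\mathfrak H}u)$) and $P_{\mathfrak H_h}(I-\pi_h)P_{\mathfrak B}u$ (giving $\mu E(P_{\mathfrak B}u)$ via the duality/gap argument). The ``principal obstacle'' you flag is handled exactly as you sketch, so no further work is needed.
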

\begin{proof}
This is essentially proven in \cite{MR2594630}, except that there it is assumed that
$u\perp\mathfrak{H}$ and $\hat u_h\perp\mathfrak{H}_h$.  To account for this difference,
let $\tilde{u}=u-P_{\mathfrak{H}}u$ and $\tilde{u}_h=\hat{u}_h-P_{\mathfrak{H}_h}\hat{u}_h$.
Then \eqref{ep1} and
\eqref{ep2} continue to hold with $u$ and $u_h$ replaced by $\tilde u$ and $\tilde u_h$, respectively,
and, in place of \eqref{ep3}, we have
$$
\<\tilde{u}_h, q\>=0, \quad q\in \Hfrak_h^k.
$$
Application of Theorem~3.11 of \cite{MR2594630} (with $f=Lu$ and $p=0$) then gives
the (\ref{epest1}--\ref{epest4}), and, instead of \eqref{epest5}, we get
\begin{align*}
 \|\tilde{u}-\tilde{u}_h\|&\leq C(E(\tilde{u})+\eta
[E(du)+E(\sigma)]+(\eta^2+\beta)E(d\sigma)+\mu
E(P_{\mathfrak{B}}u))\\
&\leq C(E(u)+E(P_{\Hfrak}u)+\eta
[E(du)+E(\sigma)]+(\eta^2+\beta)E(d\sigma)+\mu
E(P_{\mathfrak{B}}u)).
\end{align*}
Thus $\|\tilde{u}-\tilde{u}_h\|$ is bounded by the right-hand side of \eqref{epest5},
and, to complete the proof, it suffices
bound $P_{\mathfrak{H}}u-P_{\mathfrak{H}_h}\hat{u}_h$ by same quantity.
Now
$$
P_{\mathfrak{H}}u-P_{\mathfrak{H}_h}\hat{u}_h=P_{\mathfrak{H}}u-P_{\mathfrak{H}_h}u=
(I-P_{\mathfrak{H}_h})P_{\mathfrak{H}}u - P_{\mathfrak{H}_h}(u-P_{\mathfrak{H}}u),
$$
For the first term on the right-hand side, we use \cite[Theorem 3.5]{MR2594630} and \eqref{bestapprox}
to get
$$
\|(I-P_{\mathfrak{H}_h})P_{\mathfrak{H}}u\|\leq
\|(I-\pi_h)P_{\mathfrak{H}}u\|\leq C E(P_{\mathfrak{H}}u).
$$
To estimate the second term, we use the Hodge
decomposition \eqref{sec2:eq4} to write $u-P_{\mathfrak{H}}u=u_b+u_{\perp}$
with $u_b\in \mathfrak{B}^k, u_{\perp}\in \mathfrak{Z}^{k\perp}$.
Since $\mathfrak{H}_h^k\subset \mathfrak{Z}^k$,
$P_{\mathfrak{H}_h}u_{\perp}=0$, and since $\pi_hu_b\in
\mathfrak{B}_h^k$, $P_{\mathfrak{H}_h}\pi_hu_b=0$.  Hence
$P_{\mathfrak{H}_h}(u-P_{\mathfrak{H}}u)=P_{\mathfrak{H}_h}(I-\pi_h)u_b$.
We normalize this quantity by setting
$$
q=P_{\mathfrak{H}_h}(u-P_{\mathfrak{H}}u)/\|P_{\mathfrak{H}_h}(u-P_{\mathfrak{H}}u)\|\in \mathfrak{H}_h^k.
$$
Then $P_{\mathfrak H} q\in\mathfrak H$, and, by
\cite[Theorem 3.5]{MR2594630},
$\|q-P_{\mathfrak H} q\|\leq \|(I-\pi_h)P_{\mathfrak H} q\|\le\mu$.
Therefore,
\begin{equation*}
 \|P_{\mathfrak{H}_h}(u-P_{\mathfrak{H}}u)\|=(P_{\mathfrak{H}_h}(u-P_{\mathfrak{H}}u),q)
=(P_{\mathfrak{H}_h}(I-\pi_h)u_b,q)=((I-\pi_h)u_b,q).
\end{equation*}
Now $(I-\pi_h)u_b\in\mathfrak B^k$, and so is orthogonal to $\mathfrak H$.  Thus
$$
 ((I-\pi_h)u_b,q)= ((I-\pi_h)u_b,q-P_{\mathfrak H} q)\leq \|(I-\pi_h)u_b\|\|q-P_{\mathfrak H} q\|
\leq C\mu E(P_{\mathfrak{B}}u),
$$
by  \eqref{bestapprox}.
Combining these results, we get
$$
\|P_{\mathfrak{H}}u-P_{\mathfrak{H}_h}\hat{u}_h\|\leq C[E(P_{\mathfrak{H}}u)+\mu E(P_{\mathfrak{B}}u)],
$$
completing the proof of the theorem.
\end{proof}

Assuming sufficient regularity of $u$ and $\sigma=d^*u$, we can combine the estimates of the theorem
with the approximation results of \eqref{l2approx} to obtain rates of convergence for the elliptic projection.
The precise powers of $h$ and Sobolev norms that arise depend on the particular choice of spaces
in \eqref{spaces}.  For example, if we take
$\Lambda^{k-1}_h=\P_r\Lambda^{k-1}(\Th)$, then we can show the optimal
estimate $\|\sigma-\sigma_h\|\le C h^{r+1}\|\sigma\|_{r+1}$,
but,  if $\Lambda^{k-1}_h=\P_r^-\Lambda^{k-1}(\Th)$, then clearly we can only have
$\|\sigma-\sigma_h\|=O(h^r)$.  Rather than give a complicated
statement of the results, covering all the possible cases,
in the following theorem and below we restrict to a particular choice of spaces
from among the possibilities in \eqref{spaces}.
Moreover, we assume $r>1$, since the case $r=1$ is slightly different.  However, very similar results
can be obtained for any of the choices of spaces permitted in \eqref{spaces}, including for $r=1$,
in the same way.  Finally, we introduce the space
$$
\bar H^r = \{\, u\in H^r\,|\, P_{\mathfrak H} u\in H^r, \ P_{\mathfrak B} u\in H^{r-2}\,\},
$$
with the associated norm
$$
\|u\|_{\bar H^r}= \|u\|_r + \|P_{\mathfrak H} u\|_r+\|P_{\mathfrak B} u\|_{r-2},
$$
since it will arise frequently below.
\begin{theorem}\label{eprate}
Assume $H^2$ regularity for the Hodge Laplacian, so \eqref{etabeta} holds and suppose that we
use the finite element spaces $\Lambda^{k-1}_h=\P_r^-\Lambda^{k-1}(\Th)$ and $\Lambda^k_h=\P_r^-\Lambda^k(\Th)$
(so that the auxilliary space is $\Lambda^{k+1}_h=\P_r^-\Lambda^{k+1}(\Th)$), for some $r>1$.
Then we have the following convergence rates for the elliptic projection:
\begin{gather*}
\|d(\sigma-\hat{\sigma}_h)\|\leq Ch^r\|d\sigma\|_r,\\
\|\sigma-\hat{\sigma}_h\|\leq
 C h^r\|\sigma\|_r,\\
\|\hat{p}_h\|\leq Ch^r\|d\sigma\|_{r-2},\\
\|d(u-\hat{u}_h)\|\leq C h^{r}(\|du\|_{r} + \|d\sigma\|_{r-1}),\\
\|u-\hat{u}_h\|\leq C h^r\|u\|_{\bar H^r}.
\end{gather*}
\end{theorem}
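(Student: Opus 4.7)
The plan is to substitute the approximation property \eqref{l2approx} and the rates \eqref{etabeta} directly into the five estimates of Theorem~\ref{ellp}, being careful to choose, in each term, the Sobolev index that extracts exactly a power $h^r$. Since $\Lambda^{j}_h=\P_r^-\Lambda^j(\Th)$ for $j=k-1,k,k+1$, the bound \eqref{l2approx} reads $E(w)\le Ch^s\|w\|_s$ for $0\le s\le r$, and \eqref{etabeta} gives $\eta=O(h)$ and $\beta,\mu=O(h^2)$ (using $r>1$ for the latter). Throughout I would also use the elementary bounds $\|du\|_s\le \|u\|_{s+1}$, $\|\sigma\|_s=\|d^*u\|_s\le\|u\|_{s+1}$, and $\|d\sigma\|_s\le\|\sigma\|_{s+1}\le\|u\|_{s+2}$.

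The first four estimates are direct. Taking $s=r$ in $E(d\sigma)\le Ch^s\|d\sigma\|_s$ gives the bound on $\|d(\sigma-\hat\sigma_h)\|$. For $\|\sigma-\hat\sigma_h\|$, I would bound $E(\sigma)\le Ch^r\|\sigma\|_r$ directly and estimate $\eta E(d\sigma)\le h\cdot h^{r-1}\|d\sigma\|_{r-1}\le Ch^r\|\sigma\|_r$, by taking $s=r-1$. For $\|\hat p_h\|$, I would take $s=r-2$, so that $\mu E(d\sigma)\le h^2\cdot h^{r-2}\|d\sigma\|_{r-2}=Ch^r\|d\sigma\|_{r-2}$. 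For $\|d(u-\hat u_h)\|$, the first term gives $Ch^r\|du\|_r$ directly and the second gives $\eta E(d\sigma)\le Ch^r\|d\sigma\|_{r-1}$ as above.

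The fifth estimate is the one that motivates the introduction of $\bar H^r$, and is where I would proceed most carefully. Applying \eqref{l2approx} to $u$ and $P_{\mathfrak H}u$ at the full index $r$ contributes $Ch^r(\|u\|_r+\|P_{\mathfrak H}u\|_r)$. The two $\eta$-terms are handled by $\eta E(du)\le h\cdot h^{r-1}\|du\|_{r-1}\le Ch^r\|u\|_r$ and $\eta E(\sigma)\le h\cdot h^{r-1}\|\sigma\|_{r-1}\le Ch^r\|u\|_r$. The combined $(\eta^2+\beta)$-term is $O(h^2)$, so choosing $s=r-2$ gives $(\eta^2+\beta)E(d\sigma)\le Ch^r\|d\sigma\|_{r-2}\le Ch^r\|u\|_r$. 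The final term must be treated at index $s=r-2$ because we have no \emph{a priori} regularity for $P_{\mathfrak B}u$ beyond what is assumed, and it gives $\mu E(P_{\mathfrak B}u)\le Ch^r\|P_{\mathfrak B}u\|_{r-2}$. Collecting these yields the claimed bound $Ch^r\|u\|_{\bar H^r}$.

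There is really no single hard step in this argument: the whole content of the theorem is the bookkeeping in the fifth estimate, matching each factor of $\eta$, $\eta^2$, $\beta$, or $\mu$ with the largest Sobolev index that the regularity of the corresponding Hodge component of $u$ is known to support. The reason the three terms $\|u\|_r$, $\|P_{\mathfrak H}u\|_r$, $\|P_{\mathfrak B}u\|_{r-2}$ appear asymmetrically in $\bar H^r$ is precisely this split: the $\mu$ factor gains two powers of $h$, which compensates for the two lost derivatives on $P_{\mathfrak B}u$, whereas $P_{\mathfrak H}u$ is only controlled through the $E$-term and thus requires the full $H^r$ regularity.
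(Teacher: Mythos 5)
Your proposal is correct and follows exactly the route the paper intends: Theorem~\ref{eprate} is stated in the paper without a written-out proof, only the remark that one combines Theorem~\ref{ellp} with \eqref{l2approx} and \eqref{etabeta}, and your bookkeeping (in particular the choices $s=r-1$ against each factor of $\eta$ and $s=r-2$ against each factor of $\eta^2+\beta$ or $\mu$, using $r>1$ so that $\beta,\mu=O(h^2)$ and $r-2\ge0$) is precisely that computation. No gaps.
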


\section{Well-posedness of the parabolic problem}\label{sec:wp}
We now turn to the Hodge heat equation.
In this section we demonstrate well-posedness of the initial-boundary
value problem \eqref{mwf1}, \eqref{mwf2}.  The
key tool is the
Hille--Yosida--Phillips theory as presented, for example, in\cite{MR2759829}
and \cite{MR1691574}.

We begin by showing that the Hodge Laplacian is maximal
monotone, or, equivalently, in the terminology of \cite{MR1691574}, that its negative is
m-dissipative).  This is the key hypothesis needed to apply the Hille--Yosida--Phillips theory to
the problem \eqref{hhe}--\eqref{ic}.
\begin{theorem}
 The Hodge Laplacian $L$ is maximal monotone.  That is, it satisfies
$$
 \< Lv,v\>\geq 0, \quad \forall v\in D(L),
$$
and, for any $f\in L^2\Lambda^k$, there exists $u\in D(L)$ such that
$u+Lu=f$.
\end{theorem}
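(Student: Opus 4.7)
My plan addresses the two assertions separately, leveraging tools already developed in Section~\ref{sec:prelim}.

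For monotonicity, I would integrate by parts twice. The definition of $D(L)$ is arranged precisely so that $v$ and $dv$ both lie in the domain of $d^*$ while $v$ and $d^*v$ both lie in the domain of $d$; consequently no boundary terms obstruct the adjoint pairings, and
\[
\<Lv,v\> = \<dd^*v,v\> + \<d^*dv,v\> = \|d^*v\|^2 + \|dv\|^2 \ge 0.
\]

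For the range condition $u + Lu = f$, I would Hodge-decompose $f = P_{\mathfrak H}f + f_\perp$ with $f_\perp \in \mathfrak H^{k\perp}$ and treat the two pieces separately. The harmonic component is handled trivially: every $p \in \mathfrak H^k$ lies in $D(L)$ with $Lp = 0$, so $(I+L)P_{\mathfrak H}f = P_{\mathfrak H}f$. For $f_\perp$, I would seek a solution of the form $u_\perp = Kw$ with $w \in \mathfrak H^{k\perp}$. Since $LKw = w$ whenever $w \perp \mathfrak H^k$, the equation $(I+L)u_\perp = f_\perp$ collapses to $(I+K)w = f_\perp$. To invert this, I would observe that $K\colon L^2\Lambda^k \to L^2\Lambda^k$ is compact, self-adjoint (as the inverse of the self-adjoint operator $L$ restricted to $D(L) \cap \mathfrak H^{k\perp}$, extended by zero on $\mathfrak H^k$), and non-negative---the last property following from the monotonicity just proved applied to $Kg$, which gives $\<Kg,g\> = \<Kg, LKg\> = \|dKg\|^2 + \|d^*Kg\|^2 \ge 0$ for $g \in \mathfrak H^{k\perp}$. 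Hence $I+K$ is coercive on $L^2\Lambda^k$ with constant at least one, so boundedly invertible, and since $K$ preserves $\mathfrak H^{k\perp}$, the solution $w = (I+K)^{-1}f_\perp$ remains in $\mathfrak H^{k\perp}$. Setting $u := Kw + P_{\mathfrak H}f \in D(L)$ then yields $u + Lu = f$.

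I do not anticipate a serious obstacle. The heavy analytic work---construction and compactness of $K$, self-adjointness of $L$, and the Hodge decomposition---is already in hand from Section~\ref{sec:prelim}, and the steps above amount to bookkeeping on top of those results. The only point requiring mild care is the self-adjointness and non-negativity of $K$; both are routine consequences of the corresponding properties of $L$ and of the monotonicity established in the first half of the theorem.
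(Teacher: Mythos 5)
Your proof is correct, but it takes a genuinely different route from the paper's for the range condition. The paper does not invoke the solution operator $K$ at all: it applies the Riesz representation theorem to the inner product $\<du,dv\>+\<d^*u,d^*v\>+\<u,v\>$ on $H\Lambda^k\cap \0H^*\Lambda^k$ to produce a weak solution of $u+Lu=f$, and then upgrades it to a strong solution by proving the regularity $du\in \0H^*\Lambda^{k+1}$ and $d^*u\in H\Lambda^{k-1}$ directly --- writing $f-u=df_1+d^*f_2$ via the Hodge decomposition and testing with a $v\in\mathfrak Z^{k\perp}$ whose $d$ hits $du-f_2$. You instead take the solvability of the Hodge Laplacian boundary value problem (the operator $K$ of Section~\ref{sec:prelim}) as given, split off the harmonic component of $f$, and reduce $(I+L)u_\perp=f_\perp$ to inverting $I+K$ on $\mathfrak H^{k\perp}$ using compactness and non-negativity of $K$. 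Both are legitimate within the paper's framework, since $K$ is introduced in Section~\ref{sec:prelim} as an established result; your version is shorter because it off-loads the hard analysis (the compactness property underlying the existence of $K$) to the cited elliptic theory, while the paper's version is more self-contained, needing only the Riesz representation theorem and the Hodge decomposition, and yields the membership $u\in D(L)$ by an explicit computation. One small caution: you should not justify self-adjointness of $K$ via self-adjointness of $L$, since the latter is usually deduced from symmetry \emph{plus} maximal monotonicity --- the very thing being proved --- so there is a whiff of circularity there. Fortunately you do not need it: non-negativity of $K$ on $\mathfrak H^{k\perp}$ (which you establish correctly from the monotonicity identity) together with either Lax--Milgram or the Fredholm alternative already gives invertibility of $I+K$, and if desired the symmetry of $K$ follows directly from the symmetry of $L$ on $D(L)$ without any appeal to maximality.
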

\begin{proof}
For any $v\in D(L)$, $\< Lv,v\>=\< dv,dv\>+\< d^*v,d^*v\>$, so
the monotonicity inequality is obvious.  Now, for any $f\in L^2\Lambda^k$,
the Riesz representation theorem furnishes a unique
$u\in H\Lambda^k\cap \0H^*\Lambda^k$
such that
\begin{equation}\label{wf}
\< du,dv\>+\<d^*u,d^*v\>+\<u,v\>=\<f,v\>, \quad v\in H\Lambda^k\cap \0H^*\Lambda^k.
\end{equation}
We shall show that this $u$ belongs to $D(L)$, from which it follows immediately
that $u+Lu=f$.

To show that $u\in D(L)$, we must show that $du\in \0H^*\Lambda^{k+1}$ and
$d^*u\in H\Lambda^{k-1}$.  From \eqref{wf}, $f-u$ is orthogonal to $\Hfrak^k$,
so, using the Hodge decomposition of $L^2\Lambda^k$, we may write
$f-u=df_1+d^*f_2$ with $f_1\in H\Lambda^{k-1}\cap \Bfrak^*_{k-1}$ and
$f_2\in \0H^*\Lambda^{k+1}\cap \Bfrak^{k+1}$.
Then
$$
\<f-u,v\>=\<df_1 + d^*f_2,v\>=\<f_1,d^*v\>+\<f_2,dv\>, \quad v\in H\Lambda^k\cap \0H^*\Lambda^k.
$$
Combining with \eqref{wf}, we get
\begin{equation}\label{t}
\<du-f_2,dv\>+\<d^*u-f_1,d^*v\> =0, \quad v\in H\Lambda^k\cap \0H^*\Lambda^k.
\end{equation}
Now $du,f_2\in \Bfrak^{k+1}$, so there exists $v\in \mathfrak{Z}^{k\perp}=H\Lambda^k\cap \mathfrak{B}^*_k$ such that
$dv=du-f_2$.  Choosing this $v$ in \eqref{t}, we find
$du=f_2\in \0H^*\Lambda^{k+1}$, as desired.  Similarly $d^* u=f_1\in H\Lambda^{k-1}$.
\end{proof}

Since $L$ is maximal monotone and self-adjoint, we obtain the following existence theorem.
This is proved in \cite{MR1691574} in Theorems~3.1.1 and 3.2.1 for $f=0$ and $u_0\in L^2\Lambda^k$, and in
Proposition~4.1.6 for general $f$ and $u_0$ in $D(L)$.  Combining the two results by superposition, gives the
theorem.
\begin{theorem}\label{sf}
 Suppose that $u_0\in L^2\Lambda^k$ and $f\in C([0,T];L^2\Lambda^k)$ are given and that either
$f\in L^1((0,T); D(L))$ or $f\in W^{1,1}((0,T);L^2\Lambda^k)$.  Then there exists a unique
$u\in C([0,T];L^2\Lambda^k)\cap C((0,T];D(L))\cap C^1((0,T];L^2\Lambda^k)$, such that
$$
u_t + Lu = f \text { on $\Omega\x (0,T]$}, \quad u(0)=u_0.
$$
If further, $u_0\in D(L)$, then $u\in C([0,T];D(L))\cap C^1([0,T];L^2\Lambda^k)$.
\end{theorem}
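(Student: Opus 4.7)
The plan is to invoke the Hille--Yosida--Phillips theorem applied to the operator $-L$. By the previous theorem, $L$ is maximal monotone on $L^2\Lambda^k$, so $-L$ generates a strongly continuous semigroup of contractions $\{S(t)\}_{t\ge 0}$. Moreover, the identity $\<Lu,v\>=\<du,dv\>+\<d^*u,d^*v\>$ for $u,v\in D(L)$ shows that $L$ is symmetric, and, being maximal monotone, in fact self-adjoint. Hence $\{S(t)\}$ is an analytic semigroup, a fact I will use to gain regularity at positive times when $u_0\notin D(L)$.

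Next I would treat the homogeneous problem with initial data alone. Setting $u(t)=S(t)u_0$, strong continuity gives $u\in C([0,T];L^2\Lambda^k)$. Analyticity provides $u(t)\in D(L)$ for $t>0$ with $u'(t)=-Lu(t)$ depending continuously on $t\in(0,T]$, so $u\in C((0,T];D(L))\cap C^1((0,T];L^2\Lambda^k)$. If instead $u_0\in D(L)$, the semigroup preserves $D(L)$ and $L S(t)u_0=S(t)Lu_0$ is continuous up to $t=0$, giving the improved regularity $u\in C([0,T];D(L))\cap C^1([0,T];L^2\Lambda^k)$.

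For the inhomogeneous part with $u_0=0$, the natural candidate is the Duhamel formula
$$
w(t)=\int_0^t S(t-s)f(s)\,ds.
$$
I would show this is a strong solution under either of the stated hypotheses on $f$. Under $f\in L^1((0,T);D(L))$, one uses $L S(t-s)f(s)=S(t-s)Lf(s)$, whose integral over $s$ is absolutely convergent in $L^2\Lambda^k$, so $w(t)\in D(L)$ and direct differentiation of the Duhamel integral gives $w_t+Lw=f$. Under $f\in W^{1,1}((0,T);L^2\Lambda^k)$, I would instead change variables to $w(t)=\int_0^t S(\tau)f(t-\tau)\,d\tau$ and use an integration-by-parts/approximation argument that transfers the derivative onto $f$; this avoids any reference to $Lf$, but the resulting expression again satisfies $w_t+Lw=f$ in $L^2\Lambda^k$ with the claimed continuity.

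Finally, by linear superposition, $u(t)=S(t)u_0+w(t)$ is the unique solution of $u_t+Lu=f$ with $u(0)=u_0$, and the regularity class is determined by whichever piece is less smooth: $S(t)u_0$ at $t=0$ when $u_0\notin D(L)$, versus the Duhamel term elsewhere. Uniqueness follows from the contraction property: the difference of two solutions satisfies the homogeneous equation with zero initial data, and $\tfrac{d}{dt}\|v(t)\|^2=-2\<Lv,v\>\le 0$ forces $v\equiv 0$. The main obstacle, already addressed in the cited references and only sketched here, is the delicate verification that the mild solution $w$ is actually strongly differentiable in $L^2\Lambda^k$ under the $W^{1,1}$ hypothesis; that is where the Hille--Yosida machinery does real work and where I would essentially quote Proposition~4.1.6 of \cite{MR1691574} rather than reprove it.
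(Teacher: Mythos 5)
Your proposal is correct and follows essentially the same route as the paper: both rest on the maximal monotonicity and self-adjointness of $L$, invoke the Hille--Yosida--Phillips/Cazenave--Haraux results (Theorems~3.1.1 and 3.2.1 for the homogeneous part, Proposition~4.1.6 for the Duhamel term), and conclude by superposition, with uniqueness from the contraction property. The extra detail you supply (analyticity of the semigroup as the mechanism for smoothing at $t>0$) is consistent with what those cited theorems encapsulate.
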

We denote by $S(t):L^2\Lambda^k\to L^2\Lambda^k$ the solution
operator for the homogeneous problem ($f\equiv0$),
so $u(t)=S(t)u_0$ solves $u_t+Lu=0$, $u(0)=u_0$.  Then $S(t)$ is
a contraction in $L^2\Lambda^k$ for all $t\in[0,T]$, i.e.,
$\|S(t)\|\le1$,
and $S(t)$ commutes with $L$ on $D(L)$ (Theorem~3.1.1 of \cite{MR1691574}).

We can measure the regularity of the solution (for general $f$)
by using the iterated domains defined
by $D(L^l) = \{\,u\in D(L^{l-1})\,|\,L^{l-1}u\in D(L)\,\}$, $l\ge 2$.
The next theorem show that if $f$ is more regular, then the solution is also more
regular.
\begin{theorem}\label{sreg}
Suppose that in addition to the hypotheses of Theorem~\ref{sf}, we
have that $f$ belongs to $C((0,T];D(L))\cap L^1((0,T);D(L^2))$.  Then
\begin{equation}\label{reg}
u\in C^1((0,T];D(L)).
\end{equation}
\end{theorem}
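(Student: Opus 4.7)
The plan is to bootstrap regularity by applying Theorem~\ref{sf} not to $u$ itself but to $v:=Lu$, which under the reinforced hypotheses should solve an analogous parabolic Cauchy problem with forcing $Lf$.

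I would fix an arbitrary $\varepsilon\in(0,T)$ and work on the shifted interval $[\varepsilon,T]$. By Theorem~\ref{sf} we already know $u(\varepsilon)\in D(L)$, so the mild-solution identity
$$
u(t)=S(t-\varepsilon)\,u(\varepsilon)+\int_\varepsilon^t S(t-s)\,f(s)\,ds,\qquad t\in[\varepsilon,T],
$$
is meaningful, and applying $L$ should yield
$$
Lu(t)=S(t-\varepsilon)\,Lu(\varepsilon)+\int_\varepsilon^t S(t-s)\,Lf(s)\,ds.
$$
For the boundary term I would use that $S(t)$ commutes with $L$ on $D(L)$; for the integral I would use that each integrand $S(t-s)f(s)$ lies in $D(L)$ (since $f(s)\in D(L)$), that $s\mapsto S(t-s)Lf(s)$ is continuous into $L^2\Lambda^k$, and that $L$ is closed, so $L$ may be moved inside the integral. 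The resulting display exhibits $v=Lu$ as the mild solution on $[\varepsilon,T]$ of $v_t+Lv=Lf$ with initial datum $v(\varepsilon)=Lu(\varepsilon)\in L^2\Lambda^k$. The hypothesis $f\in C((0,T];D(L))$ gives $Lf\in C([\varepsilon,T];L^2\Lambda^k)$, and $f\in L^1((0,T);D(L^2))$ gives $Lf\in L^1([\varepsilon,T];D(L))$, so Theorem~\ref{sf} applies to $v$ on $[\varepsilon,T]$ and yields $v\in C((\varepsilon,T];D(L))$. Because $\varepsilon>0$ was arbitrary, $Lu\in C((0,T];D(L))$; equivalently, $u\in C((0,T];D(L^2))$.

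Finally, I would read off the full conclusion \eqref{reg} from the equation itself. From $u_t=f-Lu$, both summands lie in $C((0,T];D(L))$, hence so does $u_t$. To upgrade this continuity in the graph norm to differentiability of $u$ in the graph norm, I would invoke the fundamental-theorem identity $u(t)-u(s)=\int_s^t u_t(r)\,dr$, valid in $D(L)$ because $u_t$ is continuous into $D(L)$ and $L$ is closed; dividing by $t-s$ and sending $t\to s$ then gives convergence of the difference quotient to $u_t(s)$ in the graph norm, which is exactly $u\in C^1((0,T];D(L))$. The main obstacle I anticipate is the careful justification that $L$ may be commuted with the Duhamel integral; the remainder of the argument is essentially formal, the reinforced hypotheses on $f$ being tailored precisely to match the integrability demanded by Theorem~\ref{sf} when applied to the lifted problem for $v$.
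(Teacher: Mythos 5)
Your proof is correct, but it follows a genuinely different route from the paper's. The paper decomposes the solution into a homogeneous part (data $u_0$, zero forcing), for which it invokes the $C^\infty$-smoothing theorem for self-adjoint contraction semigroups (\cite[Theorem~7.7]{MR2759829}, giving $u\in C^k((0,T];D(L^l))$ for all $k,l$), and an inhomogeneous part with $u_0=0$, for which it applies $L^2$ directly to the Duhamel integral using $f\in L^1((0,T);D(L^2))$ to conclude $u\in C([0,T];D(L^2))$; the conclusion then follows from $u_t=f-Lu$. You instead avoid both the splitting and the smoothing theorem: you shift the time origin to $\varepsilon>0$, where $u(\varepsilon)\in D(L)$ is already guaranteed by Theorem~\ref{sf}, apply a single power of $L$ to the full mild-solution formula (using commutativity of $S$ with $L$ for the semigroup term and closedness of $L$ plus $Lf\in L^1$ for the integral term, i.e.\ Hille's theorem for Bochner integrals), and recognize $Lu$ as the Theorem~\ref{sf} solution of $v_t+Lv=Lf$ with datum $Lu(\varepsilon)$. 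This reuses Theorem~\ref{sf} as a black box, needs only one power of $L$ moved through the integral rather than two, and iterates naturally if one wants higher regularity; the price is that you must identify your explicit mild solution with the unique Theorem~\ref{sf} solution (standard, via Proposition~4.1.6 of \cite{MR1691574} and the homogeneous theory), and you obtain regularity only on $(0,T]$ rather than up to $t=0$ for the inhomogeneous part. Your closing step, upgrading $u_t\in C((0,T];D(L))$ to $u\in C^1((0,T];D(L))$ via the fundamental theorem of calculus in the graph norm, is actually more careful than the paper, which treats this as immediate from the equation.
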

\begin{proof}
If $f=0$, then \cite[Theorem~7.7]{MR2759829} implies that
\begin{equation*}
u\in C^k((0,T];D(L^l)),
\end{equation*}
for all $k,l\ge 0$.
Therefore, it is sufficient to treat the case $u_0=0$, which we do using
Duhamel's principle.  By Proposition~4.1.6 of \cite{MR1691574}, the solution
is given by
\begin{equation*}
u(t)=\int_0^tS(t-s)f(s)ds
\end{equation*}
in this case, and, assuming that $f$ satisfies the hypotheses of
Theorem~\ref{sf},
$$
u\in C([0,T];D(L))\cap C^1([0,T];L^2\Lambda^k).
$$
Now $f\in L^1((0,T);D(L^2))$, so
$$
L^2u(t) = \int_0^t S(t-s)L^2 f(s)\,ds,
$$
by the commutativity of $S(t-s)$ and $L$.
Since $S(t-s)$ is a contraction in $L^2\Lambda^k$, this implies that $u\in C([0,T];D(L^2))$
and so $Lu\in C([0,T];D(L))$.
Since we also assume that $f\in C((0,T];D(L))$, \eqref{reg} follows immediately
from the equation $u_t = f-Lu$.
\end{proof}

Next we show that the
solution $u$ guaranteed by Theorem \ref{sf}, together with
$\sigma=d^*u$, is a solution of the mixed problem
\eqref{mwf1}, \eqref{mwf2}. Since $u\in C((0,T];D(L))$,
$\sigma=d^*u\in C((0,T];H\Lambda^{k-1})$ and \eqref{mwf1}
holds.  Clearly
$$
\<u_t,v\> + \<Lu,v\> = \<f,v\>, \quad v\in L^2\Lambda^k, \ t\in (0,T].
$$
Since $u\in C((0,T];D(L))$, we have
$$
\<Lu,v\> = \<dd^* u,v\> + \<d^*du,v\> = \<d\sigma ,v\> + \<du,dv\>, \quad v\in H\Lambda^k, \ t\in(0,T].
$$
Combining the last two equations gives \eqref{mwf2}.

We are now ready to state the main result for this section.
\begin{theorem}\label{wp}
 Suppose that $u_0\in L^2\Lambda^k$ and $f\in C([0,T];L^2\Lambda^k)$ are given and that either
$f\in L^1((0,T); D(L))$ or $f\in W^{1,1}((0,T);L^2\Lambda^k)$.  Then there exist unique
$$
\sigma\in C((0,T];H\Lambda^{k-1}), \quad u\in  C([0,T];L^2\Lambda^k)\cap C((0,T];D(L))\cap C^1((0,T];L^2\Lambda^k),
$$
satisfying the mixed problem \eqref{mwf1}, \eqref{mwf2} and the initial condition $u(0)=u_0$.
If, moreover, the hypotheses of Theorem~\ref{sreg} are satisfied, then \eqref{reg} holds.
\end{theorem}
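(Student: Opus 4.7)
My plan is to piggyback on the semigroup-theoretic results already established (Theorems~\ref{sf} and \ref{sreg}) and simply translate between the strong formulation $u_t+Lu=f$ and the mixed formulation \eqref{mwf1}--\eqref{mwf2}. Indeed, the translation in one direction is already carried out in the paragraph preceding the theorem, so what remains is uniqueness and a careful accounting of regularity.

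For existence, I would apply Theorem~\ref{sf} to the data $u_0$ and $f$ to produce a unique $u\in C([0,T];L^2\Lambda^k)\cap C((0,T];D(L))\cap C^1((0,T];L^2\Lambda^k)$ solving $u_t+Lu=f$ with $u(0)=u_0$. Setting $\sigma:=d^*u$, the inclusion $u(t)\in D(L)$ forces $d^*u(t)\in H\Lambda^{k-1}$ and $du(t)\in\0H^*\Lambda^{k+1}$, so $\sigma\in C((0,T];H\Lambda^{k-1})$. The identity \eqref{mwf1} is then simply the adjoint relation $\<d^*u,\tau\>=\<u,d\tau\>$ valid for $u\in D(d^*_k)$ and $\tau\in H\Lambda^{k-1}$, while \eqref{mwf2} follows from $Lu=d\sigma+d^*du$ together with $\<d^*du,v\>=\<du,dv\>$ for $v\in H\Lambda^k$, exactly as in the paragraph preceding the theorem.

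For uniqueness, suppose $(\sigma,u)$ is any pair with the stated regularity satisfying \eqref{mwf1}, \eqref{mwf2}, and $u(0)=u_0$. Since $u(t)\in D(L)\subset\0H^*\Lambda^k$, the defining property of the adjoint $d^*_k$ together with \eqref{mwf1} forces $\sigma=d^*u$. Substituting this back into \eqref{mwf2} and using $u(t)\in D(L)$ to integrate by parts the $\<du,dv\>$ term, we obtain $\<u_t+Lu-f,v\>=0$ for every $v\in H\Lambda^k$, and hence for every $v\in L^2\Lambda^k$ by density. Thus $u$ solves the strong evolution problem $u_t+Lu=f$ with $u(0)=u_0$, and the uniqueness clause of Theorem~\ref{sf} pins down $u$, which in turn pins down $\sigma=d^*u$. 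Finally, the additional regularity assertion \eqref{reg} under the stronger hypothesis is a direct quotation of Theorem~\ref{sreg}.

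The only mildly delicate point is the uniqueness step, specifically the passage from the weak identity \eqref{mwf2} (tested only against $v\in H\Lambda^k$) to the pointwise identity $u_t+Lu=f$ in $L^2\Lambda^k$. This is painless here because we have assumed from the outset that $u(t)\in D(L)$, so $Lu$ is a bona fide $L^2$ element and the integration by parts producing $\<d^*du,v\>=\<du,dv\>$ is legitimate; the identity on the dense subspace $H\Lambda^k$ then extends to all of $L^2\Lambda^k$. No harder analytic machinery is required beyond what was already deployed in establishing Theorem~\ref{sf}.
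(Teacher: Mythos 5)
Your proposal is correct, and the existence half coincides with the paper's (which is carried out in the paragraph immediately preceding the theorem: apply Theorem~\ref{sf}, set $\sigma=d^*u$, and use the adjoint relations to verify \eqref{mwf1}--\eqref{mwf2}). Where you genuinely diverge is in the uniqueness step, which is the only part the paper actually places inside the proof environment. The paper argues by energy: setting $f=0$, it takes $\tau=\sigma$ in \eqref{mwf1} and $v=u$ in \eqref{mwf2} and adds, obtaining $\frac12\frac{d}{dt}\|u\|^2=-\|\sigma\|^2-\|du\|^2\le0$, so $u\equiv0$ from $u(0)=0$, and then $\sigma\equiv0$ from \eqref{mwf1}. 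You instead run the existence translation backwards: from $u(t)\in D(L)$ you recover $\sigma=d^*u$ via the adjoint identity, integrate by parts in \eqref{mwf2} to land on the strong equation $u_t+Lu=f$, and invoke the uniqueness clause of Theorem~\ref{sf}. Both arguments are sound, and your reduction has the merit of exhibiting the full equivalence of the mixed and strong formulations within the stated regularity class. The paper's energy argument buys something different: it uses only the weak equations and the $C^1((0,T];L^2)$ regularity of $u$ (never that $u(t)\in D(L)$ or that $du\in\0H^*\Lambda^{k+1}$), so it would survive in a weaker solution class and also yields an $L^2$-contraction/stability estimate for free; it is moreover the template for the discrete stability arguments in Propositions~\ref{sdep} and~\ref{fdep}. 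Your one "delicate point" (passing from testing against $H\Lambda^k$ to an identity in $L^2\Lambda^k$) is handled correctly by density.
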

\begin{proof}
 We have already established existence. For uniqueness, we assume $f=0$ and
take $\tau=\sigma$ in \eqref{mwf1}
and $v=u$ in \eqref{mwf2}, to obtain
$$
\frac 12 \frac d{dt}\|u\|^2 = -\|\sigma\|^2 -\|du\|^2 \le 0.
$$
Therefore $\|u\|^2$ is decreasing in time, so if $u(0)=0$, then $u\equiv 0$.
Finally, \eqref{mwf1} then implies that $\sigma\equiv0$.
\end{proof}

\section{The semidiscrete finite element method}\label{sec:sd}
The semidiscrete finite element method for the Hodge heat equation is Galerkin's method applied to the
mixed variational formulation \eqref{mwf1}, \eqref{mwf2}.  That is, we choose finite
element spaces $\Lambda_h^{k-1}$
and $\Lambda_h^k$ as in \eqref{spaces} for some value of $r\ge 1$, and seek $(\sigma_h,u_h)\in
C([0,T];\Lambda_h^{k-1})\times C^1([0,T];\Lambda_h^k)$, such that $u_h(0)=u_h^0$, a given
initial value in $\Lambda^k_h$, and
\begin{gather}
\<\sigma_h,\tau\>-\< d\tau,u_h\>=0,\quad \tau\in \Lambda_h^{k-1},\ t\in(0,T],\label{sd1}\\
\< u_{h,t},v\>+\< d\sigma_h, v\>+\< du_h, dv\>=\< f,v\>, \quad  v\in \Lambda_h^k,\ t\in(0,T].\label{sd2}
\end{gather}
In this section we shall establish convergence estimates for this scheme.

We may interpret the semidiscrete solution in terms of two operators,
$d^*_h: \Lambda_h^k\rightarrow \Lambda_h^{k-1}$ and
$L_h: \Lambda_h^k\rightarrow \Lambda^k_h$, which are discrete analogues
of $d^*$ and $L$, respectively.  For $v\in\Lambda^k_h$,  $d^*_hv\in\Lambda_h^{k-1}$ is
defined by the equation
\begin{equation*}
 \<d^*_hv,\tau\>=\< v,d\tau\>, \quad \tau\in \Lambda_h^{k-1},
\end{equation*}
and the discrete Hodge Laplacian
$L_h: \Lambda_h^k\rightarrow \Lambda^k_h$ is given by
$L_h=d^*_hd+dd^*_h$.  The following characterization
is then a direct consequence of the definitions.
\begin{lemma} The pair $(\sigma_h,u_h)\in
C([0,T];\Lambda_h^{k-1})\times C^1([0,T];\Lambda_h^k)$ solves
\eqref{sd1} and \eqref{sd2} if and only if $u_h(t)\in
C^1([0,T];\Lambda_h^k)$ solves
\begin{equation}\label{ODE}
 u_{h,t}+L_hu_h=P_hf,\quad 0\leq t\leq T,
\end{equation}
where $P_h$ is $L^2$ projection of $f$ onto $\Lambda_h^k$, and $\sigma_h=d_h^*u_h$.
\end{lemma}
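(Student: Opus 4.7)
The plan is to prove the equivalence by simply unpacking the definitions of $d_h^*$ and $L_h$ on both sides. Since the claim is a characterization, I would treat each direction separately, though they are essentially two readings of the same algebraic manipulation.

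For the forward direction, I would begin with a pair $(\sigma_h, u_h)$ solving \eqref{sd1}, \eqref{sd2}. Equation \eqref{sd1} rewritten as $\langle \sigma_h, \tau\rangle = \langle u_h, d\tau\rangle$ for all $\tau\in \Lambda_h^{k-1}$ is exactly the defining relation of $d_h^*$ applied to $u_h$, so $\sigma_h = d_h^* u_h$. To handle \eqref{sd2}, I would use the analogous definition of $d_h^*$ one degree up: for any $w\in\Lambda_h^{k+1}$, the element $d_h^* w\in \Lambda_h^k$ satisfies $\langle d_h^* w, v\rangle = \langle w, dv\rangle$ for $v\in\Lambda_h^k$. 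Applying this to $w = du_h$ (which lies in $\Lambda_h^{k+1}$ by the compatibility $d\Lambda_h^k \subset \Lambda_h^{k+1}$) rewrites the stiffness term as $\langle du_h, dv\rangle = \langle d_h^* du_h, v\rangle$. Equation \eqref{sd2} therefore becomes
\begin{equation*}
\langle u_{h,t} + d\sigma_h + d_h^* du_h, v\rangle = \langle f, v\rangle, \quad v\in\Lambda_h^k.
\end{equation*}
Substituting $\sigma_h = d_h^* u_h$ collapses the left-hand side to $\langle u_{h,t} + L_h u_h, v\rangle$, while the right-hand side equals $\langle P_h f, v\rangle$ because $v\in\Lambda_h^k$. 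Since both sides lie in $\Lambda_h^k$ and the identity holds against every test function in that space, \eqref{ODE} follows.

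For the reverse direction, I would start from a solution $u_h\in C^1([0,T];\Lambda_h^k)$ of \eqref{ODE} and define $\sigma_h := d_h^* u_h$. Then \eqref{sd1} is immediate from the definition of $d_h^*$. Testing \eqref{ODE} against $v\in\Lambda_h^k$ and reversing the computation above—using $\langle L_h u_h, v\rangle = \langle d d_h^* u_h, v\rangle + \langle d_h^* du_h, v\rangle = \langle d\sigma_h, v\rangle + \langle du_h, dv\rangle$ and $\langle P_h f, v\rangle = \langle f, v\rangle$—recovers \eqref{sd2}.

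There is no real obstacle here; the only subtlety worth flagging in the write-up is the dual use of the notation $d_h^*$ at two adjacent degrees, which is what lets us interpret the $\langle du_h, dv\rangle$ term as $\langle d_h^* du_h, v\rangle$ and hence identify $L_h u_h$. Everything else is a tautological rearrangement, so I would keep the proof to a few lines.
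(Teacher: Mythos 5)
Your proof is correct and is exactly the unpacking the paper has in mind: the paper offers no written argument beyond calling the lemma ``a direct consequence of the definitions,'' and your two-directional verification via the defining relation of $d_h^*$ at degrees $k$ and $k+1$, together with the observation that all terms lie in $\Lambda_h^k$ so testing against all $v\in\Lambda_h^k$ forces the pointwise identity, is the intended reasoning. The remark about the dual use of $d_h^*$ at adjacent degrees is a fair and accurate clarification of the paper's notation.
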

%
From the theory of ordinary differential equations, there exists
a unique solution $u_h\in C^1([0,T];\Lambda_h^k)$ solving the ODE \eqref{ODE} and taking
a given initial value. Letting $\sigma_h=d^*_hu_h$, we obtain a unique solution to
the semidiscrete finite element scheme \eqref{sd1}, \eqref{sd2}.

\begin{remark}
 The formulation \eqref{ODE} is useful for theoretical purposes, but is
typically not implemented directly, rather
only implicitly via the mixed method.  This is because the operator $d^*_h$ is not local.
Even if the finite element function $v$ is supported in just a few elements, $d^*_hv$ will
generally have global support.
\end{remark}

Next, we turn to the convergence analysis.  In Proposition~\ref{sdep}
we shall give error estimates for the difference
between the semidiscrete finite element solution and the elliptic projection
of the exact solution of the evolution equations.  Combining
these estimates with the estimates from Section~\ref{sec:ep} for the elliptic projection
gives  error estimates for the semidiscrete finite element method, which
we present in Theorem~\ref{semiconver}.

Assume the conditions of
Theorem \ref{sreg} hold, so the exact solution
$$
u\in C([0,T];L^2\Lambda^k)\cap C^1((0,T];D(L)).
$$
For
each $t>0$, we can then define the elliptic projection of $u(t)$ and of $u_t(t)$; see \eqref{ep1}--\eqref{ep3}.
Writing $(\hat\sigma_h(t),\hat u_h(t),\hat p_h(t))$ for the former, it is easy to see that its time-derivative,
$(\hat\sigma_{h,t},\hat{u}_{h,t},\hat{p}_{h,t})$, is the elliptic projection of $u_t$.  From Theorems~\ref{ellp}
and \ref{eprate}
we obtain error estimates, such as
\begin{multline}\label{sd11}
 \|u_t-\hat{u}_{h,t}\|
\leq C(E(u_t)+E(P_{\mathfrak{H}}u_t)+\eta [E(du_t)+E(\sigma_t)]
\\+(\eta^2+\beta)E(d\sigma_t)+\mu E(P_{\mathfrak{B}}u_t))
\leq C h^r\|u_t\|_{\bar H^r},
\end{multline}
with the last inequality holding for the choice of spaces made
in Theorem~\ref{eprate} (and similar results holding for the other allowable
choices of spaces).
Now, from \eqref{ep1},
\begin{equation}\label{sd8}
\<\hat{\sigma}_h,\tau\>-\<  d\tau,\hat{u}_h\>=0,\quad \tau\in \Lambda_h^{k-1}, \ t\in(0,T],
\end{equation}
and, substituting $Lu=-u_t+f$ into \eqref{ep2},
\begin{equation}\label{sd9}
\<\hat{u}_{h,t},v\>+\< d\hat{\sigma}_h,v\>+\< d\hat{u}_h,d v\>
=\<\hat{u}_{h,t}-u_t,v\>+\< f,v\>-\< \hat{p}_h,v\>.
\end{equation}
Define
$$
\sigerr=\hat{\sigma}_h-\sigma_h,\quad \uerr=\hat{u}_h-u_h,
$$
the difference between the elliptic projection and the finite element solution.
Subtracting \eqref{sd1} and \eqref{sd2} from \eqref{sd8} and \eqref{sd9}, respectively,
gives
\begin{gather}
 \< \sigerr,\tau\>-\< d\tau,\uerr\>=0,\quad \tau\in \Lambda_h^{k-1},\ 0<t\leq T,\label{sd12}
\\
\< \uerrt,v\>+\< d\sigerr,v\>+\< d \uerr,d v\>=\<\hat{u}_{h,t}-u_t-\hat{p}_h,v\>,
 \ v\in \Lambda_h^k,\ 0<t\leq T.\label{sd13}
\end{gather}
We shall now use these equations to derive bounds on $\Sigma_h$ and $U_h$ in terms of
$\hat{u}_{h,t}-u_t$ and $\hat{p}_h$, for which we derived bounds in Section~\ref{sec:ep}.
In the remainder of the paper, we adopt the notation $\|\,\cdot\|_{L^\infty(L^2)}$ for
the norm in $L^\infty(0,T;L^2\Lambda^k(\Omega))$
and similarly for other norms.
\begin{proposition}\label{sdep}
Assume $u_0\in D(L)$.  Then
\begin{gather*}
\|\uerr\|_{L^\infty(L^2)}+\|\sigerr\|_{L^2(L^2)}+
               \|d\uerr\|_{L^2(L^2)}
\leq C(\|\uerr(0)\|+\|\hat{u}_{h,t}-u_t-\hat{p}_h\|_{L^1(L^2)}),\\
 \|\sigerr\|_{L^\infty(L^2)}+\|d\sigerr\|_{L^2(L^2)}
\leq C(\|d_h^*\uerr(0)\|+\|\hat{u}_{h,t}-u_t-\hat{p}_h\|_{L^2(L^2)}).
\end{gather*}
\end{proposition}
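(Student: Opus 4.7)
The plan is to carry out two energy arguments on the error equations \eqref{sd12}--\eqref{sd13}, treating $G := \hat{u}_{h,t} - u_t - \hat{p}_h$ as a given forcing and working entirely in the discrete spaces.

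For the first estimate, I would take $\tau = \sigerr$ in \eqref{sd12} to obtain $\|\sigerr\|^2 = \<d\sigerr, \uerr\>$, and then take $v = \uerr$ in \eqref{sd13}. Substituting the first identity into the second yields the energy identity
\begin{equation*}
\tfrac12 \tfrac{d}{dt}\|\uerr\|^2 + \|\sigerr\|^2 + \|d\uerr\|^2 = \<G, \uerr\>.
\end{equation*}
Integrating in time and bounding the right-hand side by $M(t)\|G\|_{L^1(0,t;L^2)}$, where $M(t) := \sup_{s\le t}\|\uerr(s)\|$, gives a quadratic inequality of the form $\tfrac12 M(t)^2 \le \tfrac12\|\uerr(0)\|^2 + M(t)\|G\|_{L^1(L^2)}$. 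Solving this produces the $L^\infty$ bound on $\|\uerr\|$, and reinserting the bound into the integrated energy identity then supplies the $L^2$-in-time bounds on $\|\sigerr\|$ and $\|d\uerr\|$.

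For the second estimate, the key is to differentiate \eqref{sd12} in time: taking $\tau = \sigerr$ in the differentiated equation gives $\tfrac12 \tfrac{d}{dt}\|\sigerr\|^2 = \<d\sigerr, \uerrt\>$. Separately, since the subcomplex condition guarantees $d\sigerr \in d\Lambda_h^{k-1}\subset \Lambda_h^k$, I can legitimately choose $v = d\sigerr$ in \eqref{sd13}. The crucial simplification is that $\<d\uerr, d(d\sigerr)\> = 0$ because $d\circ d = 0$, so \eqref{sd13} collapses to $\<\uerrt, d\sigerr\> + \|d\sigerr\|^2 = \<G, d\sigerr\>$. Adding the two identities cancels the mixed term $\<d\sigerr,\uerrt\>$ and yields
\begin{equation*}
\tfrac12 \tfrac{d}{dt}\|\sigerr\|^2 + \|d\sigerr\|^2 = \<G, d\sigerr\>.
\end{equation*}
Young's inequality absorbs half of $\|d\sigerr\|^2$ from the right side; integrating from $0$ to $t$ and taking the supremum over $t$ gives the claimed bound, with the initial datum appearing as $\|\sigerr(0)\| = \|d_h^*\uerr(0)\|$ via \eqref{sd12} at $t = 0$.

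The main obstacle, or at least the point requiring insight, is identifying the right test in the second estimate: namely $v = d\sigerr$, which is admissible precisely because of the subcomplex hypothesis $d\Lambda_h^{k-1}\subset \Lambda_h^k$, and whose utility lies in using $d\circ d = 0$ to eliminate the $\<d\uerr, dv\>$ contribution. This ``dual'' energy device is what sharpens the initial contribution on the right-hand side to $\|d_h^*\uerr(0)\|$ rather than the cruder $\|d\uerr(0)\|$ that a more obvious test such as $v = \uerrt$ would produce.
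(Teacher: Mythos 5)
Your proposal is correct and follows essentially the same route as the paper: the same energy identity from testing with $(\sigerr,\uerr)$, and the same device of differentiating \eqref{sd12} in time and testing \eqref{sd13} with $v=d\sigerr$ (using $d\circ d=0$ and the subcomplex property) for the second bound. The only cosmetic difference is your use of Young's inequality to absorb $\|d\sigerr\|^2$, where the paper integrates first to a maximizing time $t^*$ and then over $[0,T]$; both yield the stated estimate.
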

\begin{proof}
By Theorem~\ref{sf}, $u\in C([0,T];D(L))\cap C^1([0,T];L^2\Lambda^k)$.
For each $t\in (0,T]$,  take $\tau=\sigerr(t)\in \Lambda_h^{k-1}$ in \eqref{sd12}
and $v=\uerr(t)\in \Lambda_h^k$ in \eqref{sd13},  and add to obtain
\begin{equation}
 \frac{1}{2}\frac{d}{dt}\|\uerr\|^2+\|\sigerr\|^2+\|d\uerr\|^2=\<\hat{u}_{h,t}-u_t-\hat{p}_h,\uerr\>,\label{sd10}
\end{equation}
which implies
\begin{equation*}
\frac{d}{dt}\|\uerr\|^2 \le 2\|\hat{u}_{h,t}-u_t-\hat{p}_h\|\|\uerr\|.
\end{equation*}
Taking $t^*\in [0,T]$ such  $\|\uerr\|_{L^\infty(L^2)}=\|\uerr(t^*)\|$,
and integrating this inequality from 0 to $t^*$ gives
\begin{equation*}
 \|\uerr(t^*)\|^2\leq \|\uerr(0)\|^2+2\|\hat{u}_{h,t}-u_t-\hat{p}_h\|_{L^1(L^2)}\|\uerr\|_{L^\infty(L^2)},
\end{equation*}
whence
\begin{equation}
\|\uerr\|_{L^\infty(L^2)} \leq \|\uerr(0)\|+ 2\|\hat{u}_{h,t}-u_t-\hat{p}_h\|_{L^1(L^2)},\label{sec4:22}
\end{equation}
which gives the desired bound on $U_h$.  To get the bound on $\Sigma_h$ and $dU_h$,
integrate \eqref{sd10} over $t\in [0,T]$.  This gives
\begin{equation*}
 \|\sigerr\|^2_{L^2(L^2)}+\|d\uerr\|^2_{L^2(L^2)}
\leq \frac{1}{2}\|\uerr(0)\|^2+ \|\uerr\|_{L^\infty(L^2)}\|\hat{u}_{h,t}-u_t-\hat{p}_h\|_{L^1(L^2)},
\end{equation*}
and so, by \eqref{sec4:22},
\begin{equation*}
 \|\sigerr\|_{L^2(L^2)}+\|d\uerr\|_{L^2(L^2)}\\
\leq C(\|\uerr(0)\|+\|\hat{u}_{h,t}-u_t-\hat{p}_h\|_{L^1(L^2)}),
\end{equation*}
which completes the proof of the first inequality.

To prove the second inequality, we
differentiate \eqref{sd12} in time and take $\tau=\sigerr\in \Lambda_h^{k-1}$, and then
add to \eqref{sd13} with $v=d\sigerr\in \Lambda_h^k$ (here we use the subcomplex property
$d\Lambda_h^{k-1}\subset \Lambda_h^k$).  This gives
\begin{equation*}
 \frac{1}{2}\frac{d}{dt}\|\sigerr\|^2+\|d\sigerr\|^2
=\<\hat{u}_{h,t}-u_t- \hat{p}_h,d\sigerr\>.
\end{equation*}
By integrating in time, first over $[0,t^*]$ with
$t^*\in [0,T]$ chosen so that $\|\sigerr\|_{L^\infty(L^2)}=\|\sigerr(t^*)\|$, and then over all of $[0,T]$,
we deduce that
\begin{equation*}
\|\sigerr\|_{L^\infty(L^2)} + \|d\sigerr\|_{L^2(L^2)}\leq
C(\|\sigerr(0)\|+\|\hat{u}_{h,t}-u_t-\hat{p}_h\|_{L^2(L^2)}).
\end{equation*}
Finally, we note from
\eqref{sd1} and \eqref{ep1} that $\sigerr=d_h^*\uerr$, and so complete the proof.
\end{proof}

Now suppose, for simplicity, that we choose the initial data $u_h^0$ to equal the elliptic
projection of $u_0$.  Then $U_h(0)=0$ and the right-hand sides of the inequalities in
Proposition~\ref{sdep} simplify.  Bounding them using Theorem~\ref{eprate}
and \eqref{sd11} we get, for the choice of spaces indicated in the theorem,
\begin{gather*}
\|\uerr\|_{L^\infty(L^2)}+\|\sigerr\|_{L^2(L^2)}+
               \|d\uerr\|_{L^2(L^2)}
\leq Ch^r(\|u_t\|_{L^1(\bar H^r)}+\|dd^*u\|_{L^1(H^{r-2})}),
\\
 \|\sigerr\|_{L^\infty(L^2)}+\|d\sigerr\|_{L^2(L^2)}
\leq Ch^r(\|u_t\|_{L^2(\bar H^r)}+\|dd^*u\|_{L^2(H^{r-2})}).
\end{gather*}

Combining these estimates with the estimates in Theorem~\ref{eprate} for
the elliptic projection, we obtain the main result of the section.
\begin{theorem}\label{semiconver}
Suppose that, in addition to the hypotheses of
Theorem~\ref{eprate} and \ref{sreg}, $u_0\in D(L)$.
Let $(\sigma,u)$ be the solution of (\ref{mwf1}), (\ref{mwf2}) satisfying (\ref{ic}),
and $(\sigma_h,u_h)$ the solution of
(\ref{sd1}), (\ref{sd2}) with the spaces selected as in Theorem~\ref{eprate}
and $u_h(0)$ chosen to be equal to the elliptic projection of $u_0$.
Then, we have the following error estimates for the semidiscrete finite element method:
\begin{align*}
\|\sigma-\sigma_h\|_{L^2(L^2)}&\leq Ch^r(\|u_t\|_{L^1(\bar H^r)} +\|d^*u\|_{L^2(H^r)}),
\\
\|\sigma-\sigma_h\|_{L^{\infty}(L^2)}&\leq Ch^r(\|u_t\|_{L^2(\bar H^r)}
 +\|d^*u\|_{L^\infty(H^r)}),
\\
\|d(\sigma-\sigma_h)\|_{L^2(L^2)} &\leq Ch^r(\|u_t\|_{L^2(\bar H^r)}
 +\|dd^*u\|_{L^2(H^r)}),
\\
 \|u-u_h\|_{L^{\infty}(L^2)}&\leq Ch^r(\|u\|_{L^\infty(\bar H^r)} + \|u_t\|_{L^1(\bar H^r)}),
\\
\|d(u-u_h)\|_{L^2(L^2)}&\leq Ch^r(\|u_t\|_{L^1(\bar H^r)} +\|du\|_{L^2(H^r)} + \|dd^*u\|_{L^2(H^{r-1})}).
\end{align*}
\end{theorem}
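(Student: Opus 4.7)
The plan is to combine the two sources of error by the triangle inequality: writing $\sigma-\sigma_h=(\sigma-\hat\sigma_h)+\Sigma_h$ and $u-u_h=(u-\hat u_h)+U_h$, where $(\hat\sigma_h,\hat u_h,\hat p_h)$ denotes the elliptic projection of the exact solution at each time $t$, the first summand is bounded by Theorem \ref{eprate} and the second by Proposition \ref{sdep}. All five stated inequalities should then drop out of this decomposition once one takes the appropriate time norm and tidies up.

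First I would use the choice of initial data: since $u_h(0)$ is the elliptic projection of $u_0$, we have $U_h(0)=0$, and by the relation $\Sigma_h=d_h^*U_h$ derived at the end of the proof of Proposition~\ref{sdep}, also $\Sigma_h(0)=0$. The right-hand sides of the two bounds in Proposition~\ref{sdep} therefore reduce to $\|\hat u_{h,t}-u_t-\hat p_h\|_{L^1(L^2)}$ and $\|\hat u_{h,t}-u_t-\hat p_h\|_{L^2(L^2)}$ respectively. Applying Theorem~\ref{eprate} to $u_t$ (which is its own elliptic projection's primal, by linearity of \eqref{ep1}--\eqref{ep3} in $u$) yields the two displayed semidiscrete bounds immediately preceding the theorem, namely
\begin{gather*}
\|U_h\|_{L^\infty(L^2)}+\|\Sigma_h\|_{L^2(L^2)}+\|dU_h\|_{L^2(L^2)}\le Ch^r(\|u_t\|_{L^1(\bar H^r)}+\|dd^*u\|_{L^1(H^{r-2})}),\\
\|\Sigma_h\|_{L^\infty(L^2)}+\|d\Sigma_h\|_{L^2(L^2)}\le Ch^r(\|u_t\|_{L^2(\bar H^r)}+\|dd^*u\|_{L^2(H^{r-2})}).
\end{gather*}

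Next I would apply Theorem~\ref{eprate} pointwise in $t$ to $u(t)$ and then take the appropriate $L^2(0,T)$ or $L^\infty(0,T)$ norm, recalling that $\sigma=d^*u$ and $d\sigma=dd^*u$. This gives, for example, $\|\sigma-\hat\sigma_h\|_{L^2(L^2)}\le Ch^r\|d^*u\|_{L^2(H^r)}$, $\|d(\sigma-\hat\sigma_h)\|_{L^2(L^2)}\le Ch^r\|dd^*u\|_{L^2(H^r)}$, $\|u-\hat u_h\|_{L^\infty(L^2)}\le Ch^r\|u\|_{L^\infty(\bar H^r)}$, and $\|d(u-\hat u_h)\|_{L^2(L^2)}\le Ch^r(\|du\|_{L^2(H^r)}+\|dd^*u\|_{L^2(H^{r-1})})$. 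Summing each of these with the corresponding bound on $\Sigma_h$ or $U_h$ from the previous step produces all five inequalities in the theorem, up to terms that do not literally appear on the right-hand side as stated.

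The only delicate step, and the main bookkeeping obstacle, is absorbing these leftover $\|dd^*u\|_{L^p(H^{r-2})}$ contributions into the cleaner norms that appear in the final inequalities. This is done by two elementary inequalities: first, since $d$ is a first-order operator, $\|dd^*u\|_{H^{r-2}}\le C\|d^*u\|_{H^{r-1}}\le C\|d^*u\|_{H^r}$; and second, on the bounded interval $(0,T)$, H\"older's inequality gives $\|\cdot\|_{L^1}\le T^{1/2}\|\cdot\|_{L^2}$. For the $L^\infty(L^2)$ estimate of $u-u_h$, the stray term is absorbed the other way, via $\|dd^*u\|_{L^1(H^{r-2})}\le CT\|u\|_{L^\infty(H^r)}\le CT\|u\|_{L^\infty(\bar H^r)}$. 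Matching these absorptions up one by one against each of the five stated inequalities completes the proof; no new analytic idea is required beyond what is already in Theorem~\ref{eprate} and Proposition~\ref{sdep}.
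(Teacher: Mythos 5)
Your proposal is correct and follows essentially the same route as the paper: decompose the error via the elliptic projection, bound $\Sigma_h$ and $U_h$ with Proposition~\ref{sdep} (using $U_h(0)=0$) together with \eqref{sd11} and Theorem~\ref{eprate}, and add the elliptic projection errors from Theorem~\ref{eprate} taken pointwise in time. The only difference is that you spell out the absorption of the leftover $\|dd^*u\|_{L^p(H^{r-2})}$ terms into the stated norms, which the paper leaves implicit; your handling of that bookkeeping is sound.
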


\section{The fully discrete finite element method}\label{sec:fd}
If we combine the semidiscrete finite element method with a standard time-stepping scheme to solve
the resulting system of ordinary differential equations, we obtain a
fully discrete finite element method for the Hodge heat equation
\eqref{mwf1}, \eqref{mwf2}. For simplicity, we use backward Euler's method
with constant time step $\Delta t=T/M$. We may choose any of the pairs
of finite element spaces indicated in \eqref{spaces} for any value of $r\ge 1$, but,
as above, for simplicity we restrict ourselves to
the choice $\Lambda^{k-1}_h=\P_r^-\Lambda^{k-1}(\Th)$ and
$\Lambda^k_h=\P_r^-\Lambda^k(\Th)$ with $r>1$,
the results for the other cases being simple variants.  The fully discrete method seeks
$\sigma_h^n\in \Lambda_h^{k-1},u_h^n\in \Lambda_h^k$,
satisfying the equations
\begin{gather}
 \<\sigma_h^n,\tau\>-\< d\tau,u_h^n\>=0,\quad \tau\in \Lambda_h^{k-1},\label{sec5:eq1}\\
\<\frac{u_h^n-u_h^{n-1}}{\Delta t},v\>+\< d\sigma^n_h, v\>+\< du^n_h, dv\>=\< f(t^n),v\>,
\quad v\in \Lambda_h^k.\label{sec5:eq2}
\end{gather}
for $1\le n\le M$.
It is easy to see that this linear system for $u_h^n, \sigma_h^n$ is invertible at each time step.
We initialize by choosing $u_h^0\in \Lambda_h^k$.  We also define $\sigma_h^0\in\Lambda^{k-1}$
so that \eqref{sec5:eq1} holds for $n=0$.

Next, we turn to the convergence analysis.  We first obtain
error estimates for the difference between the fully discrete finite element solution and the elliptic
projection of the exact solution of the evolution equations.  These are
stated in \eqref{fulrate1} and \eqref{fulrate2}.  Combining
these estimates with the estimates from Section~\ref{sec:ep} for the elliptic projection,
we obtain the error estimates for the fully discrete finite element method
presented in Theorem~\ref{fullyconver}.

The analysis is similar to that for the semidiscrete finite element method, but with some
extra complications arising from the time discretization. Let
($\hat{\sigma}_h^n, \hat{u}^n_h,\hat{p}_h^n$) be the elliptic projection of $u^n=u(t^n)$.

Now, from \eqref{ep1}
\begin{equation}
\<\hat{\sigma}^n_h,\tau\>-\< d\tau,\hat{u}^n_h\>=0,
  \quad \tau\in \Lambda_h^{k-1}, \  0\leq n\leq M,\label{sec5:eq3}
\end{equation}
and,
from \eqref{ep2} and the equation $u_t + Lu =f$,
\begin{multline}
\<\frac{\hat{u}_h^n-\hat{u}_h^{n-1}}{\Delta t},v\>+\< d\hat{\sigma}^n_h,v\>+\< d\hat{u}^n_h,d v\>
=\<\frac{\hat{u}_h^n-\hat{u}_h^{n-1}}{\Delta t}-u_t^n,v\>+\< f^n,v\>-\< \hat{p}^n_h,v\>
\\
=\<\frac{(\hat{u}_h^n-u^n)-(\hat{u}_h^{n-1}-u^{n-1})}{\Delta t},v\>
+\<\frac{u^n-u^{n-1}}{\Delta t}-u_t^n,v\>
+\< f^n,v\>-\< \hat{p}^n_h,v\>, \\ v\in \Lambda_h^k,\  1\leq n\leq M.\label{sec5:eq4}
\end{multline}
Set
$$
\sigerr^n=\hat{\sigma}^n_h-\sigma^n_h,\quad  \uerr^n=\hat{u}^n_h-u^n_h,
$$
the difference
between the elliptic projection and the finite element solution at each time step.
Subtracting \eqref{sec5:eq1} and \eqref{sec5:eq2} from \eqref{sec5:eq3} and
\eqref{sec5:eq4}, respectively, gives
\begin{equation}
 \< \sigerr^n,\tau\>-\< d\tau,U^n_h\>=0,\quad \tau\in \Lambda_h^{k-1},\ 0\leq n\leq M, \label{sec5:eq5}
\end{equation}
and
\begin{equation}\label{sec5:eq6}
\begin{aligned}
\<\frac{\uerr^n-\uerr^{n-1}}{\Delta t}&,v\>+\< d\sigerr^n,v\>+\< d\uerr^n,d v\>
\\
&=\<\frac{(\hat{u}_h^n-u^n)-(\hat{u}_h^{n-1}-u^{n-1})}{\Delta t},v\>+\<\frac{u^n-u^{n-1}}{\Delta t}-u_t^n,v\>
-\< \hat{p}^n_h,v\> \\
&=\< z^n,v\>, \quad v\in \Lambda_h^k,\ 1\leq n\leq M,
\end{aligned}
\end{equation}
where $z^n\in L^2\Lambda^k$ is defined by the last equation.
We easily see that
\begin{equation*}
\|z^n\|
\leq \frac{1}{\Delta t}\int_{t^{n-1}}^{t^n}\|(\hat{u}_{h,t}-u_t)(s)\|ds
+\frac{\Delta t}{2}\|u_{tt}\|_{L^{\infty}(L^2)}+\|\hat{p}^n_h\|.
\end{equation*}
By Theorem~\ref{eprate}, the last term on the right hand side is bounded
by $Ch^r\|dd^*u\|_{L^{\infty}(H^{r-2})}$, and, by \eqref{sd11}, the first term on the right hand
side by
$$
\frac{Ch^r}{\Delta t}\|u_t\|_{L^1([t^{n-1},t^n],\bar H^r)}.
$$
Thus we have proved:
\begin{proposition}\label{Eerr}
\begin{equation*}
\|z^n\|\leq \frac{\Delta t}{2}\|u_{tt}\|_{L^{\infty}(L^2)}+Ch^r(\|dd^*u\|_{L^{\infty}(H^{r-2})}
  +\frac{1}{\Delta t}\|u_t\|_{L^1([t^{n-1},t^n],\bar H^r)}).
\end{equation*}
\end{proposition}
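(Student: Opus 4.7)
The plan is to bound the three pieces of $z^n$ separately, using the expression for $z^n$ given by the last equality in \eqref{sec5:eq6}. Write $z^n = A^n + B^n - \hat p_h^n$, where
$$
A^n = \frac{(\hat u_h^n-u^n)-(\hat u_h^{n-1}-u^{n-1})}{\Delta t}, \qquad B^n = \frac{u^n-u^{n-1}}{\Delta t} - u_t^n,
$$
so that $\|z^n\|\le \|A^n\|+\|B^n\|+\|\hat p_h^n\|$ by the triangle inequality.

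For $A^n$, the idea is to apply the fundamental theorem of calculus to the time-differentiable map $t\mapsto \hat u_h(t)-u(t)$, which is legitimate because, under the hypotheses of Theorem~\ref{sreg}, $u_t$ and hence also the elliptic projection $\hat u_{h,t}$ make sense pointwise in time. Then
$$
A^n = \frac{1}{\Delta t}\int_{t^{n-1}}^{t^n} (\hat u_{h,t}-u_t)(s)\,ds,
$$
so Minkowski's integral inequality followed by the semidiscrete estimate \eqref{sd11} at each $s$ gives
$$
\|A^n\|\le \frac{1}{\Delta t}\int_{t^{n-1}}^{t^n}\|(\hat u_{h,t}-u_t)(s)\|\,ds \le \frac{Ch^r}{\Delta t}\|u_t\|_{L^1([t^{n-1},t^n];\bar H^r)}.
$$

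For $B^n$, the plan is to Taylor expand $u^{n-1}$ about $t^n$ with integral remainder, so
$$
u^{n-1} = u^n - \Delta t\,u_t^n + \int_{t^{n-1}}^{t^n}(s-t^{n-1})\,u_{tt}(s)\,ds,
$$
and hence
$$
B^n = -\frac{1}{\Delta t}\int_{t^{n-1}}^{t^n}(s-t^{n-1})\,u_{tt}(s)\,ds.
$$
Bounding $(s-t^{n-1})\le \Delta t$ and $\|u_{tt}(s)\|\le \|u_{tt}\|_{L^\infty(L^2)}$ and carrying out the elementary integral $\int_{t^{n-1}}^{t^n}(s-t^{n-1})\,ds=(\Delta t)^2/2$ yields $\|B^n\|\le (\Delta t/2)\|u_{tt}\|_{L^\infty(L^2)}$. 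Finally, the term $\|\hat p_h^n\|$ is controlled by the third estimate of Theorem~\ref{eprate} applied at time $t^n$, giving $\|\hat p_h^n\|\le Ch^r\|d\sigma^n\|_{r-2}=Ch^r\|dd^*u(t^n)\|_{r-2}\le Ch^r\|dd^*u\|_{L^\infty(H^{r-2})}$. Adding the three bounds produces the proposition.

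There is no genuine obstacle here; the work is purely bookkeeping. The only point requiring a little care is the justification of the pointwise-in-time time derivative $\hat u_{h,t}$ used in the identity for $A^n$, which relies on $u_t\in C((0,T];D(L))$ from Theorem~\ref{sreg} together with the linearity of the elliptic projection in its argument. All remaining steps are applications of Minkowski's inequality, Taylor's theorem, and the elliptic projection error estimates already proved.
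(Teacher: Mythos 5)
Your proof is correct and follows essentially the same route as the paper: split $z^n$ into the difference quotient of the elliptic projection error (bounded via the fundamental theorem of calculus and \eqref{sd11}), the Taylor remainder of the backward difference (bounded by $\tfrac{\Delta t}{2}\|u_{tt}\|_{L^\infty(L^2)}$), and $\hat p_h^n$ (bounded via Theorem~\ref{eprate}). You merely spell out the Taylor expansion and the differentiability of $t\mapsto\hat u_h(t)$ in more detail than the paper does.
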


We shall now use equations \eqref{sec5:eq5} and \eqref{sec5:eq6} to derive bounds on
$\sigerr$ and $\uerr$ in terms of $z$.
Toward this end we adopt the notations
$$
\|f\|_{l^\infty(X)}=\max_{1\le n\le M}\|f^n\|_X,\
\|f\|_{l^2(X)}=(\Delta t\sum_{n=1}^M \|f^n\|_X^2)^{1/2}, \
\|f\|_{l^1(X)} = \Delta t \sum_{n=1}^M \|f^n\|_X.
$$
\begin{proposition}\label{fdep}
 Assume $u_0\in D(L)$. Then
 \begin{gather*}
 \|\uerr\|_{l^\infty(L^2)}+ \|\sigerr\|_{l^2(L^2)}+\|d\uerr\|_{l^2(L^2)}
\leq C(\|U_h(0)\|+\|z\|_{l^1(L^2)}),
\\ \|\sigerr\|_{l^\infty(L^2)}+\|d\sigerr\|_{l^2(L^2)}\leq C(\|d_h^*U_h(0)\|+\|z\|_{l^2(L^2)}).
\end{gather*}
\end{proposition}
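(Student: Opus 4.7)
The plan is to mimic the semidiscrete proof of Proposition~\ref{sdep}, replacing time differentiation by the backward finite difference and using the well-known identity
$$
\<\frac{w^n-w^{n-1}}{\Delta t},w^n\>\ge \frac{1}{2\Delta t}(\|w^n\|^2-\|w^{n-1}\|^2),
$$
which is valid for any sequence $w^n$ in an inner product space.

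For the first inequality, I would take $\tau=\sigerr^n$ in \eqref{sec5:eq5} and $v=\uerr^n$ in \eqref{sec5:eq6} and add, obtaining
$$
\<\frac{\uerr^n-\uerr^{n-1}}{\Delta t},\uerr^n\>+\|\sigerr^n\|^2+\|d\uerr^n\|^2 = \<z^n,\uerr^n\>.
$$
Applying the identity above, summing from $n=1$ to an index $N$ at which $\|\uerr\|_{l^\infty(L^2)}$ is attained, and using Cauchy--Schwarz, I obtain $\|\uerr^N\|^2\le \|\uerr(0)\|^2 + 2\|\uerr\|_{l^\infty(L^2)}\|z\|_{l^1(L^2)}$, which yields $\|\uerr\|_{l^\infty(L^2)}\le \|\uerr(0)\|+2\|z\|_{l^1(L^2)}$ by the standard $a^2\le b^2+2ac\Rightarrow a\le b+2c$ trick. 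Summing the same inequality from $n=1$ to $M$ and inserting the bound on $\|\uerr\|_{l^\infty(L^2)}$ gives the desired control of $\|\sigerr\|_{l^2(L^2)}+\|d\uerr\|_{l^2(L^2)}$.

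For the second inequality, the discrete analogue of differentiating \eqref{sd12} in time is to write \eqref{sec5:eq5} for consecutive indices and divide by $\Delta t$, giving
$$
\<\frac{\sigerr^n-\sigerr^{n-1}}{\Delta t},\tau\> = \<d\tau,\frac{\uerr^n-\uerr^{n-1}}{\Delta t}\>,\qquad \tau\in\Lambda_h^{k-1}.
$$
I then choose $\tau=\sigerr^n$ here and $v=d\sigerr^n\in\Lambda_h^k$ in \eqref{sec5:eq6} (using the subcomplex inclusion $d\Lambda_h^{k-1}\subset\Lambda_h^k$ and $d\circ d=0$) and add, obtaining
$$
\<\frac{\sigerr^n-\sigerr^{n-1}}{\Delta t},\sigerr^n\>+\|d\sigerr^n\|^2 = \<z^n,d\sigerr^n\>.
$$
Applying the discrete identity, bounding the right-hand side by $\tfrac12\|z^n\|^2+\tfrac12\|d\sigerr^n\|^2$, and summing in $n$ as before, I obtain $\|\sigerr\|_{l^\infty(L^2)}+\|d\sigerr\|_{l^2(L^2)}\le C(\|\sigerr(0)\|+\|z\|_{l^2(L^2)})$. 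Finally, from \eqref{sec5:eq5} at $n=0$ (which is the defining relation between $\sigma_h^0$ and $u_h^0$) together with \eqref{ep1}, we have $\sigerr^0=d_h^*\uerr^0$, so $\|\sigerr(0)\|=\|d_h^*\uerr(0)\|$, matching the statement.

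I do not expect serious obstacles here; the main subtlety is verifying that $d\sigerr^n$ is a legitimate test function in \eqref{sec5:eq6} (this uses the subcomplex property, which is exactly where the bounded cochain projection framework pays off), and correctly identifying $\sigerr(0)=d_h^*\uerr(0)$ from the convention that \eqref{sec5:eq1} is imposed at $n=0$. Everything else is a direct transcription of the semidiscrete energy argument with the $\tfrac{d}{dt}\|\cdot\|^2$ identity replaced by its backward-Euler analogue and time integrals replaced by $\Delta t$-weighted sums.
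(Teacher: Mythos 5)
Your proposal is correct and follows essentially the same route as the paper: the same test-function choices ($\tau=\sigerr^n$, $v=\uerr^n$ for the first bound; the consecutive-index use of \eqref{sec5:eq5} together with $v=d\sigerr^n$ for the second), the same discrete energy identity, and the same identification $\sigerr^0=d_h^*\uerr^0$. The only cosmetic difference is that for the $l^\infty$ bound on $\uerr$ the paper iterates $\|\uerr^n\|\le\|\uerr^{n-1}\|+\Delta t\|z^n\|$ directly rather than summing the squared inequality and using the $a^2\le b^2+2ac$ trick, but both give the stated estimate.
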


\begin{proof}
Take $\tau=\sigerr^n\in \Lambda_h^{k-1}$ in \eqref{sec5:eq5}, $v=\uerr^n\in \Lambda_h^k$ in \eqref{sec5:eq6},
and add to obtain
\begin{equation}
 \Delta t(\|\sigerr^n\|^2+\|d\uerr^n\|^2)+\|\uerr^n\|^2=(\uerr^{n-1}+\Delta tz^n,\uerr^n),\label{sec5:eq8}
\end{equation}
which implies
\begin{equation*}
 \|\uerr^n\|\leq \|\uerr^{n-1}\|+\Delta t \|z^n\|.
\end{equation*}
By iteration,
\begin{equation}
\|\uerr\|_{l^\infty(L^2)}\leq \|\uerr^0\|+\Delta t\sum_{n=1}^M\|z^n\|,
\label{sec5:eq7}\end{equation}
which is the desired bound on $U_h$. To get the bound on $\sigerr$ and $d\uerr$,
we derive from \eqref{sec5:eq8} that
\begin{equation*}
      \frac{1}{2}\|\uerr^n\|^2 - \frac{1}{2}\|\uerr^{n-1}\|^2
   +  \Delta t(\|\sigerr^n\|^2+\|d\uerr^n\|^2)\leq (\Delta tz^n,\uerr^n)
      \le \Delta t \|z^n\| \|U_h\|_{l^\infty(L^2)} .
\end{equation*}
Summing then gives
\begin{equation*}
      \frac{1}{2}\|\uerr^M\|^2 - \frac{1}{2}\|\uerr^0\|^2 + \|\sigerr\|_{l^2(L^2)}^2+\|d\uerr\|_{l^2(L^2)}^2\leq
 \|\uerr\|_{l^\infty(L^2)}\|\|z\|_{l^1(L^2)},
\end{equation*}
and so,  by  \eqref{sec5:eq7},
\begin{equation*}
\|\sigerr\|_{l^2(L^2)}^2+\|d\uerr\|_{l^2(L^2)}^2
\leq \|\uerr^0\|^2+\frac{3}{2}\|z\|_{l^1(L^2)}^2,
\end{equation*}
which completes the proof of the first inequality.

To prove the second inequality, we take $\tau=\sigerr^n\in \Lambda_h^{k-1}$ in \eqref{sec5:eq5} at both
time level $n-1$ and level $n$. This gives
\begin{equation}
(\sigerr^{n-1},\sigerr^n)=(d\sigerr^n,\uerr^{n-1}),\quad
(\sigerr^{n},\sigerr^n)=(d\sigerr^n,\uerr^{n}),\quad 1\leq n\leq M.\label{sec5:fdsig}
\end{equation}
Next take $v=d\sigerr^n\in \Lambda_h^k$ in \eqref{sec5:eq6} and substitute \eqref{sec5:fdsig} to get
\begin{equation*}
 (\sigerr^{n}-\sigerr^{n-1},\sigerr^n)+\Delta t \|d\sigerr^n\|^2=\Delta t(z^n,d\sigerr^n),\ 1\leq n\leq M,
\end{equation*}
whence
\begin{equation*}
 \|\sigerr^{n}\|^2-\|\sigerr^{n-1}\|^2+\Delta t \|d\sigerr^n\|^2\leq \Delta t\|z^n\|^2.
\end{equation*}
Again we get a telescoping sum, so
\begin{equation*}
\|\sigerr\|_{l^\infty(L^2)}^2+\|d\sigerr\|_{l^2(L^2)}^2
\leq C(\|\sigerr^0\|^2+\|z\|_{l^2(L^2)}^2).
\end{equation*}
This implies the second inequality and so completes the proof of the proposition.
\end{proof}

As in Section~\ref{sec:sd}, we choose the initial data $u_h^0$ to equal the elliptic projection of $u_0$ for
simplicity. Then $U_h(0)=0$ and the right-hand sides of the inequalities in Proposition~\ref{fdep} simplify. Bounding them
via Proposition~\ref{Eerr} we get for the first
\begin{multline}\label{fulrate1}
 \|\uerr\|_{l^\infty(L^2)}+\|\sigerr\|_{l^2(L^2)}+\|d\uerr\|_{l^2(L^2)}
\\
\leq C\Delta t\|u_{tt}\|_{L^{\infty}(L^2)}+Ch^r(\|dd^*u\|_{L^{\infty}(H^{r-2})}+\|u_t\|_{L^1(\bar H^r)}).
\end{multline}
For the second, we bound the $L^1([t^{n-1},t^n])$ norm in Proposition~\ref{Eerr} by $\Delta t$
times the $L^\infty$ norm, and substitute the resulting bound for $z$ in the second estimate of
Proposition~\ref{fdep}, obtaining
\begin{multline}\label{fulrate2}
\|\sigerr\|_{l^\infty(L^2)}+\|d\sigerr\|_{l^2(L^2)}
\leq
C\Delta t\|u_{tt}\|_{L^{\infty}(L^2)}+Ch^r(\|dd^*u\|_{L^{\infty}(H^{r-2})}
+\|u_t\|_{L^{\infty}(\bar H^r)}).
\end{multline}

Combining \eqref{fulrate1}, \eqref{fulrate2} with the estimates in Theorem~\ref{eprate}
for the elliptic projection, we obtain the main result of the section.
\begin{theorem}\label{fullyconver}
Under the same assumptions as Theorem~\ref{semiconver},
Let $(\sigma,u)$ be the solution of (\ref{mwf1}), (\ref{mwf2}) satisfying (\ref{ic}),
and $(\sigma_h^n,u_h^n)$ the solution of
(\ref{sec5:eq1}), (\ref{sec5:eq2}) with $u_h^0$ equal to the elliptic projection of $u_0$.
Then, we have the following error estimates for the fully discrete finite element method:
\begin{gather*}
\|\sigma-\sigma_h\|_{l^2(L^2)} \leq
 C\Delta t\|u_{tt}\|_{L^{\infty}(L^2)}
+Ch^r(\|u_t\|_{L^1(\bar H^r)}+\|d^*u\|_{ L^{\infty}(H^r)}),
\\
\|\sigma-\sigma_h\|_{l^\infty(L^2)} \leq
 C\Delta t\|u_{tt}\|_{L^{\infty}(L^2)}
+Ch^r(\|u_t\|_{L^{\infty}(\bar H^r)}+\|d^*u\|_{L^{\infty}(H^r)} ),
\\
\|d(\sigma-\sigma_h)\|_{l^2(L^2)} \leq
 C\Delta t\|u_{tt}\|_{L^{\infty}(L^2)}+Ch^r(\|u_t\|_{L^{\infty}(\bar H^r)}+\|dd^*u\|_{L^{\infty}(H^r)}),\\
\|u-u_h\|_{l^\infty(L^2)} \leq
 C\Delta t\|u_{tt}\|_{L^{\infty}(L^2)}
+Ch^r(\|u\|_{L^{\infty}(\bar H^r)}+\|u_t\|_{L^1(\bar H^r)})),
\\
 \|d(u-u_h)\|_{l^2(L^2)} \leq     C\Delta t\|u_{tt}\|_{L^{\infty}(L^2)}
+Ch^{r}(\|u_t\|_{L^1(\bar H^{r})}+\|du\|_{ L^{\infty}(H^{r})}+\|dd^*u\|_{L^{\infty}(H^{r-1})}).
\end{gather*}
\end{theorem}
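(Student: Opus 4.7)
The plan is to prove each of the five estimates in Theorem~\ref{fullyconver} by inserting the elliptic projection and splitting via the triangle inequality. Writing $\sigma_h^n = \hat\sigma_h^n - \Sigma_h^n$ and $u_h^n = \hat u_h^n - U_h^n$, each error decomposes into an elliptic-projection part, controlled by pointwise-in-time application of Theorem~\ref{eprate}, plus a purely discrete part $\Sigma_h$ or $U_h$, for which \eqref{fulrate1} and \eqref{fulrate2} give the required bounds. Because $u_h^0$ is chosen equal to the elliptic projection of $u_0$, the initial term $U_h(0)$ vanishes, so the right-hand sides of \eqref{fulrate1}, \eqref{fulrate2} already have the form appearing in the theorem.

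Concretely, for the representative case $\|\sigma - \sigma_h\|_{l^2(L^2)}$, I would write
\[
\|\sigma - \sigma_h\|_{l^2(L^2)} \leq \|\sigma - \hat\sigma_h\|_{l^2(L^2)} + \|\Sigma_h\|_{l^2(L^2)}.
\]
The second summand is bounded directly by \eqref{fulrate1}, yielding the time-step term and $Ch^r(\|dd^*u\|_{L^\infty(H^{r-2})} + \|u_t\|_{L^1(\bar H^r)})$; the $\|dd^*u\|_{L^\infty(H^{r-2})}$ piece is absorbed into the $\|d^*u\|_{L^\infty(H^r)}$ term coming from the other summand. For that first summand, the pointwise bound $\|\sigma(t^n) - \hat\sigma_h(t^n)\| \leq Ch^r \|\sigma(t^n)\|_r$ from Theorem~\ref{eprate}, squared and summed against $\Delta t$, is bounded by $T\,C^2 h^{2r}\|\sigma\|_{L^\infty(H^r)}^2 = T\,C^2 h^{2r}\|d^*u\|_{L^\infty(H^r)}^2$; taking the root (and absorbing $\sqrt T$ into the constant) delivers the stated contribution.

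The remaining four estimates follow the same template with the appropriate bound from Theorem~\ref{eprate} substituted in. For the $l^\infty$-in-time norms I pass from the pointwise elliptic-projection bound to its supremum rather than summing; for $\|d(\sigma - \sigma_h)\|_{l^2(L^2)}$, the bound $\|d(\sigma - \hat\sigma_h)\| \leq Ch^r\|d\sigma\|_r = Ch^r\|dd^*u\|_r$ supplies the $\|dd^*u\|_{L^\infty(H^r)}$ term; for $\|d(u - u_h)\|_{l^2(L^2)}$ both the $\|du\|_r$ and $\|dd^*u\|_{r-1}$ factors from Theorem~\ref{eprate} contribute, while \eqref{fulrate1} supplies the $\|u_t\|_{L^1(\bar H^r)}$ and $\Delta t\|u_{tt}\|_{L^\infty(L^2)}$ parts. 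The only care required is bookkeeping: matching which side of the decomposition furnishes which term in the final bound, recognizing that lower-order Sobolev pieces from one side are absorbed by higher-order pieces from the other, and hiding $T$-dependent factors inside the generic constant. No new analytical ingredient beyond Theorem~\ref{eprate}, Proposition~\ref{Eerr}, and Proposition~\ref{fdep} is needed, so the main obstacle is purely organizational rather than technical.
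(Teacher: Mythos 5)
Your proposal is correct and is essentially the paper's own argument: the paper likewise proves Theorem~\ref{fullyconver} in one line by combining \eqref{fulrate1} and \eqref{fulrate2} (which control $\Sigma_h$ and $U_h$) with the elliptic-projection estimates of Theorem~\ref{eprate} via the triangle inequality, absorbing lower-order Sobolev terms such as $\|dd^*u\|_{L^\infty(H^{r-2})}$ into the stated norms. Your bookkeeping of which summand supplies which term matches what the paper leaves implicit.
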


The error estimates are analogous to those of Theorem~\ref{semiconver} for the
semidiscrete solution, with each containing an additional $O(\Delta t)$ term coming from
the time discretization.  For each quantity, the error is of order $O(\Delta t +h^r)$.

\section{Numerical examples}\label{sec:num}
In this section, we present the results of simple numerical computations
verifying the theory above.

First we compute a two-dimensional example for the $1$-form Hodge heat equation.
Using vector proxies, we may write the parabolic equations \eqref{hhe}--\eqref{ic} as
\begin{gather*}
u_t+(\curl\rot-\nabla\div)u=f \text{ in } \Omega\times [0,T],\\
u\cdot n=\rot u=0 \text{ on } \partial \Omega\times [0,T], \quad
u(\,\cdot\,,0)=u_0  \text{ in } \Omega,
\end{gather*}
where
$$
\rot u=\frac{\partial u_2}{\partial x_1}-\frac{\partial u_1}{\partial x_2}, \quad
\curl u=(\frac{\partial u}{\partial x_2},-\frac{\partial u}{\partial x_1}).
$$
We choose $\Omega$ to be a square annulus $[0,1]\times[0,1]\backslash[0.25,0.75]\times[0.25,0.75]$ and take
 the exact solution as
$$
u=\begin{pmatrix}
100x(x-1)(x-0.25)(x-0.75)t\\
100 y(y-1)(y-0.25)(y-0.75)t
        \end{pmatrix}.
$$
Note that this function is not orthogonal to 1-harmonic forms on $\Omega$.
We use the finite element spaces $\P_r\Lambda^0(\Th)$ (Lagrange elements of degree $r$) for
$\sigma=-\div u$ and $\P_r^-\Lambda^1(\Th)$ (Raviart--Thomas elements) for $u$, starting
with an initial unstructured  mesh, and then refining it uniformly.
We take  $\Delta t=0.0001$ and compute the error at time $T=0.01$ (after 100 time steps).
Tables~\ref{tb:1} and \ref{tb:2} show the results for $r=1$ and $2$ respectively.
The rates of convergence are just as predicted by the theory.
\begin{table}[htbp]
\begin{tabular}{ccccccc}
mesh size&  $\|\sigma-\sigma_h\|$&  rate& $\|\nabla(\sigma-\sigma_h)\|$&rate&$\|u-u_h\|$&rate\\
\hline\\[-2ex]
$h$&   0.0008490  &1.99&0.1026276&1.01&0.0010586&0.96\\
$h/2$&      0.0002132&  1.99&0.0512846&1.00&0.0005341 &0.99\\
$h/4$&0.0000534&2.00&0.0256528 &1.00&0.0002678 &1.00\\
$h/8$&0.0000133 &2.00&0.0128295 &1.00&0.0001340 &1.00\\
\hline\\
\end{tabular}
\caption{Computation with $P_1\Lambda^0\times P_1^-\Lambda^1$ in two dimensions.}\label{tb:1}
\end{table}

\begin{table}[htbp]
\begin{tabular}{ccccccc}
mesh size&  $\|\sigma-\sigma_h\|$&  rate& $\|\nabla(\sigma-\sigma_h)\|$&rate&$\|u-u_h\|$&rate\\
\hline\\[-2ex]
$h$&  0.0000093   &3.03&0.0016510&2.03&0.0000705&1.97\\
$h/2$&      0.0000012&  3.00&0.0004119&2.00&0.0000178 &1.99\\
$h/4$&0.0000001&3.00&0.0001031 &2.00&0.0000045 &1.99\\
$h/8$&0.0000000  &3.04&0.0000258 &2.00&0.0000011 &2.00\\
\hline\\
\end{tabular}
\caption{Computation with $P_2\Lambda^0\times P_2^-\Lambda^1$ in two dimensions.}\label{tb:2}
\end{table}

For the second example, we let
$\Omega$ be the unit cube $[0,1]\times[0,1]\times [0,1]$ in $\mathbb{R}^3$, and
again solve the $1$-form Hodge heat equation.
Using vector proxies, the initial--boundary value problem becomes
\begin{gather*}u_t+(\curl\curl-\nabla\div)u=f \text{ in } \Omega\times [0,T]\\
u\cdot n=0,\ \curl u \x n=0 \text{ on } \partial \Omega\times [0,T], \quad
u(\,\cdot\,,0)=u_0  \text{ in } \Omega.
\end{gather*}
We take the exact solution to be
$$
u=\begin{pmatrix}
\sin (\pi x_1)t\\
\sin (\pi x_2)t\\
\sin (\pi x_3)t
\end{pmatrix}.
$$
Table~\ref{tb:3} shows the errors and rates of convergence for linear elements
on a sequence of uniform meshes, again at time $T=0.01$ after $100$ time steps.
Once again, the rates of convergence
are just as predicted by the theory.

\begin{table}[htbp]
\begin{tabular}{ccccccc}
mesh size&  $\|\sigma-\sigma_h\|$&  rate& $\|\nabla(\sigma-\sigma_h)\|$&rate&$\|u-u_h\|$&rate\\
\hline\\[-2ex]
\phantom{00}0.25& 0.0023326  &2.06&0.0260155&1.02&0.0026024&1.00\\
\phantom{0}0.125&     0.0005735&  2.02&0.0134836&0.95&0.0013499 &0.95\\
0.0625&0.0001429 &2.01&0.0068169  &0.98&0.0006879&0.97\\
\hline\\
\end{tabular}
\caption{Computation with $P_1\Lambda^0\times P_1^-\Lambda^1$ in three dimensions.}\label{tb:3}
\end{table}
\providecommand{\bysame}{\leavevmode\hbox to3em{\hrulefill}\thinspace}
\providecommand{\MR}{\relax\ifhmode\unskip\space\fi MR }
\providecommand{\MRhref}[2]{%
  \href{http://www.ams.org/mathscinet-getitem?mr=#1}{#2}
}
\providecommand{\href}[2]{#2}

 \end{document}